\def\C{{\mathbf C}}
\def\R{{\mathbf R}}
\def\N{{\mathbf N}}
\def\F{{\mathcal F}}
\def\conj{\mathcal C}
\def\virgp{\raise 2pt\hbox{,}}
\def\({\left(}
\def\){\right)}
\def\<{\left\langle}
\def\>{\right\rangle}
\def\le{\leqslant}
\def\ge{\geqslant}
\DeclareMathOperator{\RE}{Re}
\DeclareMathOperator{\IM}{Im}
\def\Eq#1#2{\mathop{\sim}\limits_{#1\rightarrow#2}}
\def\Tend#1#2{\mathop{\longrightarrow}\limits_{#1\rightarrow#2}}
\def\d{{\partial}}
\def\eps{\varepsilon}
\def\si{{\sigma}}
\def\O{{\mathcal O}}
\definecolor{highlightred}{RGB}{156,18,8} 
\theoremstyle{plain}
\newtheorem{theorem}{Theorem}[section]
\newtheorem{lemma}[theorem]{Lemma}
\newtheorem{corollary}[theorem]{Corollary}
\newtheorem{proposition}[theorem]{Proposition}
\theoremstyle{definition}
\newtheorem{remark}[theorem]{Remark}
\numberwithin{equation}{section}
\begin{document}

\title[Rigidity properties and numerics for NLS scattering]{On scattering for NLS:
  rigidity properties and numerical simulations via the lens transform}

\author[R. Carles]{R\'emi Carles}
\address[R. Carles]{CNRS \& Univ. Rennes\\ IRMAR - UMR 6625\\ 
  Rennes, France}
\email{Remi.Carles@math.cnrs.fr}

\author[G. Maierhofer]{Georg Maierhofer}
\address[G. Maierhofer]{Department of Applied Mathematics and Theoretical Physics, University of Cambridge,
  United Kingdom} 
\email{gam37@cam.ac.uk}

\begin{abstract}
  We analyse the scattering operator associated with the
    defocusing nonlinear Schr\"odinger equation which captures the
    evolution of solutions over an infinite time-interval under the
    nonlinear flow of this equation. The asymptotic nature of the
    scattering operator (involving unbounded time) makes its
    computation particularly challenging. We overcome this by
    exploiting the space–time compactification provided by the lens
    transform, marking the first use of this technique in numerical
    simulations. This results in a highly efficient and reliable
    methodology for computing the scattering operator in various
    regimes. In developing this approach we  introduce and prove
    several new identities and theoretical properties of the
    scattering operator. We support our construction with several
    numerical experiments which we show to agree with known analytical
    properties of the scattering operator,  and also address the case
    of long-range scattering for the one-dimensional cubic
    Schr\"odinger equation. Our simulations permit us to further
    explore regimes beyond current analytical understanding, and lead
    us to formulate new conjectures concerning fixed and rotating
    points of the operator, as well as its existence in the long-range
    setting for both defocusing and focusing cases. 
\end{abstract}

 
\thanks{This work was supported by Centre Henri Lebesgue,
program ANR-11-LABX-0020-0. A CC-BY public copyright license has
  been applied by the author to the present document and will be
  applied to all subsequent versions up to the Author Accepted
  Manuscript arising from this submission.}

\maketitle

\section{Introduction}
\label{sec:intro}

We consider  the defocusing nonlinear Schr\"odinger
equation 
\begin{equation}
  \label{eq:nls}
  i\d_t u + \frac{1}{2}\Delta u = \lvert u\rvert^{2\si} u,\quad
  (t,x)\in \R\times \R^d,\ d\ge 1,
\end{equation}
and assume that the nonlinearity is
energy-subcritical, $0<\si<\frac{2}{(d-2)_+}$. As the nonlinearity is
defocusing, global existence of solutions in the Sobolev space
$H^1(\R^d)$ is classical for such powers, as proven initially in
\cite{GV79Cauchy}. Once global existence is established, the next
standard question is the large time behaviour of the solution. As the
Schr\"odinger equation is a dispersive equation, it is likely that
the nonlinearity becomes small as time gets large, and, possibly,
eventually negligible. This is the underlying idea of scattering
theory, which we now summarise in a more quantitative way. 
\smallbreak

To understand the large time behaviour of solutions to \eqref{eq:nls},
two types of initial data are of special interest: 
\begin{align}
&\text{Initial value problem: } u_{\mid t=0} = u_0,\label{eq:CIcauchy}\\
  &\text{Final value problem: }U_0(-t)u(t)\big|_{t=-
  \infty}=u_-,\quad \text{where
  }U_0(t)=e^{i\frac{t}{2}\Delta}.\label{eq:CIasym} 
\end{align}
Asymptotic states $u_+$ for $t=+\infty$ can also be considered, we address
only the case $t=-\infty$ to avoid any ambiguity in the discussion.
The scattering operator can be defined after two steps,
which we describe below.\\

\noindent {\bf Existence of wave operators.} For this step, we impose
the behaviour of the nonlinear solution as time goes to (plus or minus)
infinity. More precisely, for a given asymptotic state $u_-$, we want
to find a solution to \eqref{eq:nls} such that
\begin{equation*}
  u(t)\Eq t {-\infty} U_0(t)u_- = e^{i\frac{t}{2}\Delta}u_-.
\end{equation*}
As the Schr\"odinger group is dispersive, $U_0(t)u_-$ goes to zero
pointwise, this is why the precise requirement is \eqref{eq:CIasym}. 
If we assume
\begin{equation}\label{eq:wave-op}
  \si>1\text{ if }d=1,\quad \si>\frac{2}{d+2}\text{ if }d\ge 2,
\end{equation}
then it is well known that for a given asymptotic state $u_-\in \Sigma=H^1\cap{\mathcal F}(H^1)$, 
where
\begin{equation}\label{eq:fourier}
  {\mathcal F} f(\xi)=\widehat
  f(\xi)=\frac{1}{(2\pi)^{d/2}}\int_{\R^d}f(x)e^{-ix\cdot \xi} dx,
\end{equation}
\eqref{eq:nls}--\eqref{eq:CIasym} has a unique, global, solution
$u\in C(\R;\Sigma)$ (\cite{GV79Scatt}, see also
\cite{CazCourant}). Its initial value $u_{\mid t=0}$ is the image of 
the asymptotic state under the action of the wave operator:
\begin{equation*}
  u_{\mid t=0}= W_-u_-. 
\end{equation*}
The space $\Sigma$ is also characterised as a
weighed Sobolev space, since
\begin{equation*}
  \Sigma = \{f\in L^2(\R^d),\quad \nabla f\in L^2(\R^d),\ x\mapsto |x|
  f(x)\in L^2(\R^d)\}.
\end{equation*}
We can define the norm on $\Sigma$ as
\begin{equation*}
  \|f\|_\Sigma = \|f\|_{L^2(\R^d)}+ \|\nabla f\|_{L^2(\R^d)}+
  \||x|f\|_{L^2(\R^d)}. 
\end{equation*}
\noindent {\bf Asymptotic completeness.}
We now turn to the large time behaviour of solutions to the Cauchy
problem. If 
\begin{equation}\label{eq:completeness}
  \si \ge \frac{2-d+\sqrt{d^2+12d+4}}{4d}=:\si_0(d),
\end{equation}
{where $\si_0(d)$ is called the Strauss exponent},
and $u_0\in \Sigma$, \eqref{eq:nls}--\eqref{eq:CIcauchy}
possesses asymptotic states:
\begin{equation}\label{eq:cvSigma}
  \exists u_\pm\in \Sigma,\quad
  \|U_0(-t)u(t)-u_\pm\|_\Sigma \Tend t {\pm \infty} 0: \quad 
  u_\pm=W_\pm^{-1}u_0.
\end{equation}
See \cite{GV79Scatt} for the case $\si>\si_0(d)$, and
\cite{CW92,NakanishiOzawa} for the case $\si=\si_0(d)$. 
The same is true under the weaker assumption \eqref{eq:wave-op},
provided that $u_0$ belongs to a sufficiently small ball in $\Sigma$
(whose size is not explicit). We note the inequalities
$1/d<\si_0(d)<2/d$, the value $\si=2/d$ corresponding to the
$L^2$-critical, or conformally invariant, nonlinearity. 
\smallbreak

The scattering operator associated to \eqref{eq:nls} is classically
defined as
\begin{equation}\label{eq:scattering_operator}
  S=W_+^{-1}\circ W_-: u_-\mapsto u_+.
\end{equation}
It maps $\Sigma$ to $\Sigma$ under the assumption
\eqref{eq:completeness}, and a small ball of $\Sigma$ to (a small ball
of) $\Sigma$ under the weaker assumption \eqref{eq:wave-op} (see
e.g. \cite[Theorem~7.5.6]{CazCourant}). Also, for $u_0\in 
\Sigma$ and $\si>1/d$, there exist $u_\pm \in H^1(\R^d)$ such that
\begin{equation}\label{eq:cvH1}
  \|u(t)-U_0(t)u_\pm\|_{H^1}\Tend t {\pm \infty}0,
\end{equation}
as established in \cite{BGTV23}, extending the initial result of
\cite{TsutsumiYajima}.

The assumption $\si>1/d$ is necessary in the sense that if $\si\le
1/d$, a long range theory is necessary as the above convergence is
possible only in the trivial case $u\equiv 0$. This follows from the
stronger result from \cite{Barab}: if $\si\le 1/d$ and
\begin{equation*}
  \|u(t)-U_0(t)u_+\|_{L^2(\R^d)}\Tend t {+ \infty}0,
\end{equation*}
then necessarily, $u=u_+\equiv 0$. Note that in the range
$1/d<\si<\si_0(d)$, it is not known whether scattering holds in
$\Sigma$, that is if \eqref{eq:cvH1} can be improved to
\eqref{eq:cvSigma} for large data $u_0\in \Sigma$. This point is
addressed numerically in Section~\ref{sec:short}. Our simulations
suggest that \eqref{eq:cvSigma} may not be true, at least for
some $\si\in(1/d,\sigma_0(d))$.  
\smallbreak

Apart from the existence of the operator $S$, rather few properties
are known. The goal of this paper is to prove two identities, {\eqref{eq:center} and \eqref{eq:moment}}, which
appear to be new, and to investigate numerically the behaviour of $S$,
by using the lens transform, recalled below.

\begin{remark}
  The scattering operator can also be defined in some situations for
  the focusing nonlinear Schr\"odinger equation,
  \begin{equation*}
    i\d_t u +\frac{1}{2}\Delta u = -|u|^{2\si}u.
  \end{equation*}
  Statements in such a case require more care, due to the existence
  of solitary waves, and also of
  blow-up solutions when $\si\ge 2/d$.  We 
  choose to consider mostly the defocusing case in order to make the
  presentation as simple as possible. However in the cubic
  one-dimensional case (corresponding to long range scattering), we
  consider numerically the focusing case in
  Section~\ref{sec:long-foc}, and explore whether the size 
  of the (explicit) ground states is a threshold for the existence of a
  (modified) scattering operator. 
\end{remark}
\subsection{Some known dynamical properties}

 It is well-known that the scattering operator $S$, when it is defined
 like above, is unitary on $L^2$,
 and on $\dot H^1$: 
\begin{equation*}
  \lVert S(u_-)\rVert_{L^2(\R^d)}=\lVert u_-\rVert_{L^2(\R^d)}\quad
  ;\quad \lVert \nabla S(u_-)\rVert_{L^2(\R^d)}=\lVert \nabla
  u_-\rVert_{L^2(\R^d)}. 
\end{equation*}
These identities follow from \cite{GV79Scatt,TsutsumiSigma,HT87}, and
can be found also in \cite{CazCourant}.
\smallbreak

A natural question is whether the scattering operator is trivial
(equal to the identity) or not. It is rather straightforward to
compute the first two terms of the asymptotic expansion of $S$ near
the origin, which coincide with the first two Picard
iterates: for $u_-\in \Sigma$ and $\si$ satisfying \eqref{eq:wave-op}, 
\begin{equation*}
  S(\eps u_-) = \eps u_- + \eps^{2\si+1} G(u_-) +
  \O\(\eps^{4\si+1}\)\quad\text{in }L^2(\R^d),\text{ as }\eps\to 0,
\end{equation*}
where
\begin{equation*}
 G(u_-)= -i\int_{-\infty}^{+\infty} U_0(s)\( |U_0(-s)u_-|^{2\si}
 U_0(-s)u_-\)ds,
\end{equation*}
see \cite{CG09} and references therein.
By considering for $u_-$ a Gaussian (whose evolution under $U_0$ can
be computed explicitly), it is easy to check that the operator $G$ is
not identically zero.
\smallbreak

We note also that the properties of $S$ near the origin have proven
useful in inverse problems (see
e.g. \cite{Sasaki2007,ChenMurphy,MR4576319,KiMuVi-p}); the analyticity
of $S$ was addressed in \cite{CG09} (when $\si$ is an integer).
\begin{remark}[Inhomogeneous nonlinearity]\label{rem:inhom1}
  Properties of the scattering map in the case of inhomogeneous
  nonlinearities of the form
  \begin{equation}\label{eq:inhom}
    i\d_t u +\frac{1}{2}\Delta u = a(x)|u|^{2\si}u,
  \end{equation}
  have also proven useful in inverse problems, see e.g. \cite{Murphy2023} 
  and reference therein. In principle, our numerical approach can be
  considered in this case as well, up to a substantial increase of the
  numerical cost. See Remark~\ref{rem:inhom2} for a more precise
  discussion regarding this statement. 
\end{remark}
\smallbreak

As noticed in \cite{CW92} (see also \cite{CazCourant}), if $\conj$
denotes the conjugation $f\mapsto \overline 
f$, the time reversibility property associated
to \eqref{eq:nls} implies the relation
\begin{equation}\label{eq:conj}
  W_\pm = \conj \circ W_\mp\circ\conj .
\end{equation}
In the special $L^2$-critical case $\si=2/d$, two additional
properties have been established: in \cite{COMRL}, we find the relation
  \begin{equation*}
    \F\circ W_\pm^{-1}=W_\mp\circ \F,
  \end{equation*}
which, together with \eqref{eq:conj}, implies
\begin{equation*}
 W_\pm^{-1} = \(\conj  \F\)^{-1}  W_\pm \(\conj \F\). 
\end{equation*}
In \cite{CaDPDE}, still for the case $\si=2/d$, it was proven that for
all $\theta\in [0,2\pi]$, there 
exists infinitely many $u_-\in \Sigma$ such that
$S(u_-)=e^{i\theta}u_-$ (see Proposition~\ref{prop:CaDPDE} below). In
the next section, we present other 
algebraic relations which can be viewed as extra rigidity properties of
the scattering operator $S$ and the waves operators $W_\pm$.

\subsection{New algebraic properties}
\label{sec:new-alg}

The following statement recalls the algebraic properties evoked above,
and contains two new identities:

\begin{theorem}\label{theo:identities}
Let $\frac{1}{d}<\si<\frac{2}{(d-2)_+}$ and $u\in C(\R;\Sigma)$ solution to
\eqref{eq:nls}. Assume that there exist $u_\pm \in \Sigma$ 
such that
\begin{equation*}
  \|U_0(-t)u(t)-u_\pm\|_\Sigma \Tend t {\pm \infty}0. 
\end{equation*}
Then, with $u_0=u_{\mid t=0}$, the following
  identities hold:
   \begin{equation}
    \label{eq:mass}
    \|u_-\|_{L^2}= \|u_0\|_{L^2}=\|u_+\|_{L^2},
  \end{equation}
  \begin{equation}
    \label{eq:energy}
    \|\nabla u_-\|^2_{L^2}= \|\nabla
    u_0\|^2_{L^2}+\frac{2}{\si+1}\|u_0\|_{L^{2+2\si}}^{2+2\si}
    =\|\nabla u_+\|^2_{L^2},
  \end{equation}
  \begin{equation}
    \label{eq:moment0}
    \begin{aligned}
       \IM  \int_{\R^d} \overline{u_-}(x)\nabla u_-(x)dx& =
    \IM\int_{\R^d}\overline{u_0}(x)\nabla u_0(x)dx\\
&=   \IM  \int_{\R^d} \overline{u_+}(x)\nabla u_+(x)dx,
    \end{aligned}
   \end{equation}
  \begin{equation}
    \label{eq:center}
  \int_{\R^d} x|u_-(x)|^2dx =    \int_{\R^d} x|u_0(x)|^2dx =
  \int_{\R^d} x|u_+(x)|^2dx .
\end{equation}
In the $L^2$-critical case $\si=2/d$, we have in addition
  \begin{equation}
    \label{eq:moment}
    \|xu_-\|_{L^2}^2 +
    \frac{2d}{2+d}\|\widehat{u_-}\|_{L^{2+4/d}}^{2+4/d}=
    \|xu_0\|_{L^2}^2 =  \|xu_+\|_{L^2}^2 +
    \frac{2d}{2+d}\|\widehat{u_+}\|_{L^{2+4/d}}^{2+4/d}.
  \end{equation}
\end{theorem}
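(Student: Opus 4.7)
The strategy in each case is the same: apply a known (near-)conservation law of the NLS flow, express the relevant quantity in terms of the interaction-picture solution $v(t):=U_0(-t)u(t)$, and pass to the limit $t\to\pm\infty$ using the scattering hypothesis $\|v(t)-u_\pm\|_\Sigma\to 0$. Two auxiliary identities underlie most of the arguments: the Galilean commutation $x+it\nabla = U_0(t)\,x\,U_0(-t)$, and the factorization $U_0(t)=M(t)D(t)\F M(t)$, where $M(t)\phi(x) = e^{i|x|^2/(2t)}\phi(x)$ and $D(t)$ is the associated unitary dilation.

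The identities \eqref{eq:mass} and \eqref{eq:energy} follow from the conservation of mass and of the energy $E(u) = \tfrac{1}{2}\|\nabla u\|_{L^2}^2 + \tfrac{1}{\si+1}\|u\|_{L^{2+2\si}}^{2+2\si}$, combined with the unitarity of $U_0$ on $L^2$ and on $\dot H^1$ and the $\Sigma$-convergence of $v(t)$ to $u_\pm$. The only nontrivial input is $\|u(t)\|_{L^{2+2\si}}^{2+2\si}\to 0$ as $|t|\to\infty$; I would derive this from the factorization applied to $u(t)=U_0(t)v(t)$, the dispersive bound $\|D(t)g\|_{L^p}\lesssim|t|^{-d(1/2-1/p)}\|g\|_{L^p}$, and the embedding $\Sigma\hookrightarrow L^{2+2\si}$ in the energy-subcritical range. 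The momentum identity \eqref{eq:moment0} is then immediate once one notes that $P$ is conserved both by \eqref{eq:nls} and by $U_0$ (the latter since $U_0$ is a unitary Fourier multiplier commuting with $\nabla$), so $P(v(t))=P(u(t))=P(u_0)$, and $P$ is continuous on $\Sigma$.

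The center of mass identity \eqref{eq:center} is more subtle because $\int x|u(t)|^2\,dx$ is itself not conserved. I would combine the Ehrenfest relation $\tfrac{d}{dt}\int x|u(t)|^2\,dx = P(u(t))$, which integrates to $\int x|u(t)|^2\,dx = \int x|u_0|^2\,dx + tP(u_0)$, with a short Plancherel computation using $\widehat{xf}=i\nabla_\xi\hat f$ and $\widehat{U_0(-t)u}(\xi)=e^{it|\xi|^2/2}\hat u(\xi)$, which produces the cancellation
\[
\int x|v(t)|^2\,dx = \int x|u(t)|^2\,dx - t\,P(u(t)).
\]
Momentum conservation then shows that $\int x|v(t)|^2\,dx = \int x|u_0|^2\,dx$ is independent of $t$, and \eqref{eq:center} follows by letting $t\to\pm\infty$.

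The main obstacle is the $L^2$-critical identity \eqref{eq:moment}. I would start from the pseudo-conformal conservation law, which for $\si=2/d$ reads
\[
\|(x+it\nabla)u(t)\|_{L^2}^2 + \frac{2t^2}{\si+1}\|u(t)\|_{L^{2+4/d}}^{2+4/d} = \|xu_0\|_{L^2}^2.
\]
By the Galilean commutation the first term equals $\|xv(t)\|_{L^2}^2$, which converges to $\|xu_\pm\|_{L^2}^2$ under the $\Sigma$-convergence. The delicate point is the second term; applying the factorization to $u(t)=U_0(t)v(t)$ and performing a change of variables yields the clean identity
\[
t^2\|u(t)\|_{L^{2+4/d}}^{2+4/d} = \|\F M(t)v(t)\|_{L^{2+4/d}}^{2+4/d}.
\]
A dominated-convergence argument shows $M(t)\psi\to\psi$ in $\Sigma$ as $|t|\to\infty$ for each $\psi\in\Sigma$ (the extra $x/t$ factor arising in $\nabla(M(t)\psi)$ is controlled by $\||x|\psi\|_{L^2}/|t|$); combining this with $\|v(t)-u_\pm\|_\Sigma\to 0$ gives $M(t)v(t)\to u_\pm$ in $\Sigma$, hence $\F M(t)v(t)\to\widehat{u_\pm}$ in $\Sigma$, and the Sobolev embedding $\Sigma\hookrightarrow L^{2+4/d}$, valid in the energy-subcritical range for every $d\ge 1$, delivers the required convergence of the $L^{2+4/d}$-norm. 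Using $\tfrac{2}{\si+1} = \tfrac{2d}{d+2}$ then produces \eqref{eq:moment}.
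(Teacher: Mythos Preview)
Your proof is correct. For \eqref{eq:mass}--\eqref{eq:center} your argument coincides with the paper's: both use the standard conservation laws, the identity $J(t)=U_0(t)\,x\,U_0(-t)$, and the Ehrenfest relation to show that $\int x|v(t)|^2\,dx$ is constant in $t$ and then pass to the limit.

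For \eqref{eq:moment} the routes genuinely differ. You work directly with the pseudo-conformal conservation law, using the $MDFM$ factorization to rewrite $t^2\|u(t)\|_{L^{2+4/d}}^{2+4/d}$ as $\|\F M(t)v(t)\|_{L^{2+4/d}}^{2+4/d}$ and then proving $M(t)v(t)\to u_\pm$ in $\Sigma$ so that the Sobolev embedding closes the argument. The paper instead passes through the lens transform: in the $L^2$-critical case the lens-transformed equation \eqref{eq:nlsharmo} is autonomous and carries the conserved quantity \eqref{eq:evol2}; evaluating it at $t=-\pi/2,0,\pi/2$ and invoking Lemma~\ref{lem:waveharmo} (which identifies $v(\pm\pi/2)$ with $e^{\mp id\pi/4}\F u_\pm$) gives \eqref{eq:moment} in one line via Plancherel. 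The paper explicitly acknowledges that your pseudo-conformal route is available; its lens-transform argument is shorter because the asymptotic analysis you carry out (convergence of $M(t)v(t)$ in $\Sigma$, Sobolev embedding) is absorbed into the single identification Lemma~\ref{lem:waveharmo}. Conversely, your argument is more self-contained, needing neither the lens transform nor the auxiliary conservation laws of \eqref{eq:nlsharmo}.
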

\begin{remark}
  The above result holds (at least) in the following case. Let $u_-\in
  \Sigma$. For $\si<\frac{2}{(d-2)_+}$ satisfying
  \eqref{eq:completeness}, or more generally \eqref{eq:wave-op} provided that
  $\|u_-\|_\Sigma$ is sufficiently small, we let 
 $u_0= W_-u_-\in \Sigma$, and $u_+=Su_-\in
  \Sigma$.
\end{remark}

The identities \eqref{eq:mass}, \eqref{eq:energy} and
\eqref{eq:moment0} are standard.  The identities \eqref{eq:center} and
\eqref{eq:moment} are new to the best of our knowledge. The relation \eqref{eq:moment} can
be proven by using the 
pseudo-conformal conservation law discovered in \cite{GV79Scatt}. We
give an argument based on the lens transform, which makes the proof
very short.

The relation \eqref{eq:center} implies for instance (together with
\eqref{eq:mass}) the fact that the nonlinear scattering map cannot act
as a translation 
operator on a nontrivial asymptotic state. This is obvious in the case of a
radially symmetric state, since the dynamics of \eqref{eq:nls}
preserves the radial symmetry, and we have more generally: 
\begin{corollary}
  Let $\frac{1}{d}<\si<\frac{2}{(d-2)_+} $ satisfying \eqref{eq:completeness}, and
    $x_0\in \R^d$. The identity 
  \begin{equation*}
    S(u_-)(x)=u_-(x-x_0),\quad \forall x\in \R^d,
  \end{equation*}
  is possible for some $u_-\in \Sigma$ only in the trivial case, $u_-=0$ or
  $x_0=0$. 
\end{corollary}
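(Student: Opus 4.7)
The plan is to combine the newly-established identity \eqref{eq:center} with the mass conservation \eqref{eq:mass} and exploit the change of variables associated with a translation. The strategy is very short: assume $S(u_-)(x) = u_-(x-x_0)$ and compare the first moments $\int x|u_\pm(x)|^2dx$ predicted by the two points of view.

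First I would compute the first moment of $u_+ := S(u_-)$ under the assumed translation form. Changing variables $y = x - x_0$ in the integral gives
\begin{equation*}
  \int_{\R^d} x\,|u_+(x)|^2 dx = \int_{\R^d} (y+x_0)\,|u_-(y)|^2 dy = \int_{\R^d} y\,|u_-(y)|^2 dy + x_0\,\|u_-\|_{L^2}^2.
\end{equation*}
Next I would invoke \eqref{eq:center}, which under the hypothesis \eqref{eq:completeness} (ensuring that $u_0 = W_- u_-$ is well defined in $\Sigma$ and $S u_-\in\Sigma$) gives $\int x|u_-|^2 dx = \int x|u_+|^2 dx$. Subtracting, this forces the identity
\begin{equation*}
  x_0\,\|u_-\|_{L^2}^2 = 0.
\end{equation*}

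Since $x_0\in\R^d$ is a fixed vector, the above equation forces either $x_0 = 0$ or $\|u_-\|_{L^2}=0$, i.e.\ $u_-\equiv 0$, which is precisely the desired dichotomy. The only mild point to check is that the computation actually makes sense, i.e.\ $x\mapsto x|u_\pm(x)|^2$ is integrable; this is guaranteed by $u_\pm\in\Sigma$, as follows from the hypothesis on $\si$ and the mapping properties of $S$ on $\Sigma$ recalled just after \eqref{eq:scattering_operator}. There is no real obstacle here—the work has already been done in proving the rigidity identity \eqref{eq:center}, and the corollary is a one-line change of variables on top of it.
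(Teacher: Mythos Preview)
Your argument is correct and matches the paper's intended proof: the text preceding the corollary explicitly states that it follows from \eqref{eq:center} together with \eqref{eq:mass}, which is precisely the change-of-variables computation you carry out.
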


\begin{remark}
  The Plancherel formula shows that \eqref{eq:moment0} can be rewritten,
  since 
  \begin{equation*}
      \IM  \int_{\R^d} \overline{u_-}(x)\nabla u_-(x)dx=\frac{1}{i}\int_{\R^d}
      \overline{u_-}(x)\nabla u_-(x)dx, 
  \end{equation*}
as
\begin{equation*}
    \int_{\R^d} \xi|\widehat{u_-}(\xi)|^2d\xi =    \int_{\R^d}
    \xi|\widehat{u_0}(\xi)|^2d\xi = 
  \int_{\R^d} \xi|\widehat{u_+}(\xi)|^2d\xi .
\end{equation*}
So the centre in phase space is the same for $u_-$, $u_0$, and
$u_+$. This property constitutes a precious test for numerical
methods. 
\end{remark}

\subsection{Lens transform}
\label{sec:lens}

For $|t|<\pi/2$, introduce $v$, defined by
\begin{equation}
  \label{eq:lens}
  v(t,x)=\frac{1}{\(\cos t\)^{d/2}} u\(\tan t,\frac{x}{\cos t}\) e^{-i
  \frac{|x|^2}{2}\tan t},
\end{equation}
which has the same value as $u$ at time $t=0$. As noticed in
\cite{KavianWeissler,Rybin}  (see also \cite{CaM3AS,TaoLens}), $v$
solves (for $|t|<\pi/2$)
\begin{equation}
  \label{eq:nlsharmo-gen}
  i\d_t v +\frac{1}{2}\Delta v = \frac{\lvert x\rvert^2}{2} v + \(\cos
  t\)^{d\si-2}\lvert
  v\rvert^{2\si}v,\quad v_{\mid t=0} = u_{\mid t=0}=u_0.
\end{equation}
We note that the nonlinearity in  \eqref{eq:nlsharmo-gen} is
autonomous exactly in the $L^2$-critical case $\si=2/d$. 
The compactification of time in \eqref{eq:lens} implies the following
consequence: 
\begin{lemma}[See Lemma~2.4 in \cite{CaDPDE}]\label{lem:waveharmo}
  Let  $u\in C(\R;\Sigma)$ and $v\in
  C\(\left[-\frac{\pi}{2},\frac{\pi}{2}\right];\Sigma\)$ solve
  \eqref{eq:nls}  and 
  \eqref{eq:nlsharmo}, respectively, with $u_{\mid t=0}=v_{\mid
  t=0}=u_0$. Suppose that there exist $u_\pm\in\Sigma$ such that
\begin{equation*}
  \left\| U_0(-t)u(t)-u_\pm\right\|_{\Sigma}\Tend t {\pm \infty}0.
\end{equation*}
Then
\begin{equation}\label{eq:initial_final_identities}
 v\(-\frac{\pi}{2},x\)=e^{id\pi/4}{\mathcal F}\(
 u_-\)(-x) \quad ;\quad v\(\frac{\pi}{2},x\)= e^{-id\pi/4} {\mathcal F}\(
  u_+\)(x) .
\end{equation}
\end{lemma}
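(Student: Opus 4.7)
The plan is to pass to the limit $t\to\pm\pi/2$ directly in the defining relation \eqref{eq:lens}, using the asymptotic description $u(\tau)\approx U_0(\tau)u_\pm$ in $\Sigma$ as $\tau\to\pm\infty$. The algebraic workhorse is the exact factorisation of the free Schr\"odinger group, valid for any $\tau\ne 0$:
\begin{equation*}
 U_0(\tau)f(x)=\frac{1}{(i\tau)^{d/2}}\,e^{i|x|^2/(2\tau)}\,\mathcal{F}\bigl(M_\tau f\bigr)(x/\tau),\qquad M_\tau f(y)=e^{i|y|^2/(2\tau)}f(y),
\end{equation*}
where the principal branch gives $(i\tau)^{d/2}=|\tau|^{d/2}e^{id\pi/4}$ for $\tau>0$ and $|\tau|^{d/2}e^{-id\pi/4}$ for $\tau<0$.

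First I would write $u(\tan t,\cdot)=U_0(\tan t)u_\pm+r_\pm(t,\cdot)$ with $\|r_\pm(t)\|_\Sigma\to 0$ as $t\to\pm\pi/2$, and substitute into \eqref{eq:lens}. Evaluating the factorisation at the dilated argument $y=x/\cos t$ produces two quadratic phases, $-|x|^2\tan t/2$ from the lens chirp and $|x|^2/(2\sin t\cos t)$ from the propagator, whose sum simplifies via $1-\sin^2 t=\cos^2 t$ into $|x|^2/(2\tan t)$; the prefactors collapse through $\cos t\cdot\tan t=\sin t$ into $1/(i\sin t)^{d/2}$; and the dilation argument becomes $x/\sin t$. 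This yields
\begin{equation*}
 v(t,x)=\frac{1}{(i\sin t)^{d/2}}\,e^{i|x|^2/(2\tan t)}\,\mathcal{F}\bigl(M_{\tan t}u_\pm\bigr)(x/\sin t)+\widetilde r_\pm(t,x),
\end{equation*}
with $\widetilde r_\pm(t,x)=(\cos t)^{-d/2}e^{-i|x|^2\tan t/2}r_\pm(t,x/\cos t)$.

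Next I would take $t\to\pm\pi/2$ in the main term. Since $\sin t\to\pm 1$, we have $x/\sin t\to\pm x$; since $\tan t\to\pm\infty$, the phase $e^{i|x|^2/(2\tan t)}$ tends to $1$ and $M_{\tan t}u_\pm\to u_\pm$ in $L^2$ by dominated convergence; and the prefactor $(i\sin t)^{-d/2}$ tends to $e^{-id\pi/4}$ for $t\to+\pi/2$ and to $e^{+id\pi/4}$ for $t\to-\pi/2$, the sign flip reflecting the switch of branch of $(i\sin t)^{d/2}$ as $\sin t$ changes sign. The main term therefore converges in $L^2(\R^d)$ to $e^{\mp id\pi/4}\mathcal{F}(u_\pm)(\pm x)$, which is precisely the right-hand side of \eqref{eq:initial_final_identities}.

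The main obstacle is the control of the remainder $\widetilde r_\pm$, and this is where the convergence in $\Sigma$ plays its role by forcing in particular $L^2$-convergence. The change of variable $y=x/\cos t$ is an $L^2$-isometry, so $\|\widetilde r_\pm(t)\|_{L^2}=\|r_\pm(t)\|_{L^2}\le\|r_\pm(t)\|_\Sigma\to 0$. Combined with the hypothesis $v\in C([-\pi/2,\pi/2];\Sigma)$, which ensures $v(t,\cdot)\to v(\pm\pi/2,\cdot)$ in $L^2$ as $t\to\pm\pi/2$, this identifies $v(\pm\pi/2,\cdot)$ with the limit computed above and establishes \eqref{eq:initial_final_identities}.
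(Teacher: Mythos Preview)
Your argument is correct and is essentially the standard proof of this lemma; note, however, that the paper does not itself prove Lemma~\ref{lem:waveharmo} but cites it from \cite{CaDPDE}. Your route---substituting the exact factorisation $U_0(\tau)=M_\tau D_\tau\mathcal F M_\tau$ (which the paper recalls just after Lemma~\ref{lem:tsutsumi}) into the lens formula \eqref{eq:lens} and simplifying the resulting phases and dilations---is precisely the natural one.

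One minor imprecision: you write $\|r_\pm(t)\|_\Sigma\to 0$, but this does not follow directly from the hypothesis $\|U_0(-\tau)u(\tau)-u_\pm\|_\Sigma\to 0$. Indeed, setting $\rho_\pm(\tau)=U_0(-\tau)u(\tau)-u_\pm$, one has $r_\pm=U_0(\tau)\rho_\pm$ and hence $xr_\pm=U_0(\tau)(x-i\tau\nabla)\rho_\pm$, so $\|xr_\pm\|_{L^2}\le \|x\rho_\pm\|_{L^2}+|\tau|\,\|\nabla\rho_\pm\|_{L^2}$, and the factor $|\tau|\to\infty$ prevents a direct conclusion. This is harmless for your proof, since you only use $\|r_\pm(t)\|_{L^2}=\|\rho_\pm(\tan t)\|_{L^2}\to 0$, which does hold; it would be cleaner to state only that.
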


The above property was used in \cite{CaDPDE} in order to reduce the
property $S(u_-)=e^{i\theta}u_-$ in the $L^2$-critical case to the
existence of solutions to an elliptic equation. It has been employed also
to prove scattering for low regularity solutions to \eqref{eq:nls} by
considering  \eqref{eq:nlsharmo-gen}, in
e.g. \cite{BurqThomann2024,BTT13,PoRoTh14}. It is also present in the argument
of \cite{BGTV23} for the above mentioned result. The lens transform
was used for other questions than scattering theory, see
e.g. \cite{DMR11,TaoLens}. 

\subsection{Numerical approach}
\label{sec:approach}

A major difficulty when trying to simulate numerically the scattering
map $S$ lies in the dispersive properties of the group $U_0$, since
\begin{equation*}
  U_0(t)f(x)\Eq t \infty  \frac{1}{(it)^{d/2}}\widehat
  f\(\frac{x}{t}\)e^{i\frac{\lvert x\rvert^2}{2t}},
\end{equation*}
see Lemma~\ref{lem:tsutsumi} for a precise statement. In particular,
for large time, the solution leaves any fixed box, and the exiting
part cannot be neglected if one wants to take hold of the asymptotic
state. In \cite[Section~5]{CGM3AS}, a first attempt at computing the
scattering operator was made, consisting in 
computing $U_0(-t)u(t)$ for large values of $t$. The role of the
composition with $U_0(-t)$ was to compensate the dispersive effect, in
order to get the convergence to the scattering state and to avoid
boundary effects. However, since $u(t)$ is computed first, dispersive
effects are present, and corrected afterwards, which cannot rule out
the possibility of interaction with the boundary. The way out chosen in
\cite{CGM3AS} was to stop the computation at some large (but fixed)
time $T$, and choose a computational domain sufficiently large so the
interaction of $u$ with the domain remains weak. 
\smallbreak

In contrast, our approach is based on the lens transform which leads to several advantages: firstly, as pointed out in \cite{TaoLens}, the lens
transform \eqref{eq:lens} compactifies space and time; time is
obviously compactified because $t=\infty$ for $u$ corresponds to
$t=\pi/2$ for $v$. Space is compactified too because the Laplace
operator in \eqref{eq:nls} is replaced by the harmonic oscillator in
\eqref{eq:nlsharmo-gen}, and this operator is confining (it has a
compact resolvent). Therefore, it becomes much easier to avoid the
interaction of $v$ with the boundary of the computational domain,
provided that this domain is chosen sufficiently large.
Secondly, as the
solution to \eqref{eq:nlsharmo-gen} can be computed for $t\in
(-\pi/2,\pi/2)$ at least when the cosine factor is locally integrable,
we may investigate the large time behaviour of the solution to
\eqref{eq:nls} for any $\si>1/d$. Note that the cosine factor ceases
to be locally integrable when $\si=1/d$, which corresponds to the
appearance of long range effects in \eqref{eq:nls}, that is, the
notion of scattering must be modified.
\smallbreak

An idea similar to the one described above was used in \cite{CaSu23}
in order to investigate 
numerically the dynamics of the Hamiltonian $-\Delta-|x|^2$ perturbed
by a logarithmic nonlinearity. The Schr\"odinger equation in the
presence of this repulsive harmonic potential enjoys an exponential
dispersive rate (as opposed to the algebraic rate without potential),
which causes huge difficulties when it comes to numerics. An analogue
of the lens transform was used in \cite{CaSu23} in order to neutralise
this strong dispersion: that transform removes the
potential, and compactifies time (so the dispersion due to the
operator $U_0(t)$ is essentially harmless numerically).

\subsection{Contents}

In Section~\ref{sec:background}, we recall some technical tools and
properties useful in the analysis of nonlinear scattering theory, and
prove Theorem~\ref{theo:identities}. We also recall the argument of
the main result from \cite{CaDPDE}, proving the existence of rotating points
in the $L^2$-critical case, and sketch some specific aspects of the long
range case $\si=1/d$. Section~\ref{sec:num} is dedicated to
numerical experiments, with emphasis on the existence of rotating
points in the $L^2$-critical case and in the $L^2$-supercritical
case, and on the long range case in one dimension.
In Section~\ref{sec:short}, we explore numerically the actual
threshold value for scattering to hold in $\Sigma$ for large data. In
Section~\ref{sec:long-foc}, we consider the focusing, cubic,
one-dimensional Schr\"odinger equation, and compare numerically the
existence of modified scattering with the size of the initial data. 
Concluding remarks and a few directions for future research are provided in Section~\ref{sec:conclusions}. In
Appendices~\ref{app:hermite_transform} \& \ref{app:hermit_identities}, we gather auxiliary identities and algorithms
for Hermite expansions used in Section~\ref{sec:num}, and in
Appendix~\ref{app:continuity_argument_sigma}, 
we show a quantitative continuity argument for the dependence of the
scattering operator upon the power $\si$, in order to support the
discussion on some experiments presented in
Section~\ref{sec:num}.

\subsection*{Acknowledgements}
The authors are grateful to the analonymous referees for
their constructive suggestions,
to Valeria Banica and
Katharina Schratz for many helpful discussions at the origin of this
work, and to Marcus Webb for several helpful discussions related to stable Hermite transforms. Both authors gratefully acknowledge funding from the European Research Council (ERC) under the European Union's Horizon 2020 research and innovation programme (grant agreement No.\ 850941).

\section{Analytic background}
\label{sec:background}

\subsection{Generalities}
\label{sec:generalities}

We recall the three standard quantities conserved by the flow
of 
\eqref{eq:nls}: mass, energy, and linear momentum, given respectively
by
\begin{align*}
  M&=\|u(t)\|_{L^2(\R^d)}^2,\\
E& = \frac{1}{2}\|\nabla u(t)\|_{L^2(\R^d)}^2
   +\frac{1}{\si+1}\|u(t)\|_{L^{2\si+2}(\R^d)}^{2\si+2} ,\\
P &= \IM \int_{\R^d} \bar u(t,x)\nabla u(t,x)dx.
\end{align*}

The following lemma is standard (see
\cite{TsutsumiSigma} or  \cite{Rauch91}):
\begin{lemma}\label{lem:tsutsumi}
  Let $f\in L^2(\R^d)$, and recall that
  $U_0(t)=e^{i\frac{t}{2}\Delta}$. 
  \begin{equation*}
    \left\lVert U_0(t) f - A(t)f\right\rVert_{L^2(\R^d)} \Tend t
    {\pm\infty} 0, \quad \text{where }A(t)f(x) = \frac{1}{(it)^{d/2}}\widehat
  f\(\frac{x}{t}\)e^{i\frac{\lvert x\rvert^2}{2t}},
  \end{equation*}
and the Fourier transform is normalised in \eqref{eq:fourier}. 
\end{lemma}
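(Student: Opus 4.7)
The plan is to rely on the well-known factorisation of the free Schrödinger group into a product of multiplication, dilation and Fourier transform operators, and then reduce the claim to a dominated convergence argument.

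First I would start from the explicit integral representation
\begin{equation*}
  U_0(t)f(x) = \frac{1}{(2i\pi t)^{d/2}}\int_{\R^d}e^{i|x-y|^2/(2t)}f(y)dy,
\end{equation*}
expand the square $|x-y|^2 = |x|^2 - 2x\cdot y + |y|^2$, and collect the pieces to obtain the factorisation
\begin{equation*}
  U_0(t) = M(t)\,D(t)\,\F\,M(t),
\end{equation*}
where $M(t)g(x) = e^{i|x|^2/(2t)}g(x)$ denotes multiplication by the quadratic phase, $D(t)g(x) = (it)^{-d/2}g(x/t)$ is the normalised dilation, and $\F$ is the Fourier transform from \eqref{eq:fourier}. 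With this at hand, $A(t)$ is simply $A(t) = M(t)\,D(t)\,\F$, so that
\begin{equation*}
  U_0(t)f - A(t)f = M(t)\,D(t)\,\F\bigl(M(t)f - f\bigr).
\end{equation*}

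Next, I would use that $M(t)$, $D(t)$ and $\F$ are all isometries on $L^2(\R^d)$ (for $M(t)$ this is pointwise, for $D(t)$ a change of variables absorbs the $|t|^{-d/2}$ factor, and for $\F$ it is Plancherel). This reduces the statement to
\begin{equation*}
  \bigl\|U_0(t)f - A(t)f\bigr\|_{L^2(\R^d)} = \bigl\|(M(t)-I)f\bigr\|_{L^2(\R^d)} = \bigl\|(e^{i|x|^2/(2t)}-1)f\bigr\|_{L^2(\R^d)}.
\end{equation*}
Finally, as $|t|\to\infty$, the integrand $|e^{i|x|^2/(2t)}-1|^2|f(x)|^2$ converges pointwise to $0$ and is dominated by $4|f(x)|^2\in L^1(\R^d)$, so dominated convergence closes the argument.

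There is no genuine obstacle here: the entire content is the algebraic factorisation $U_0 = MD\F M$, after which every operator is unitary on $L^2$ and the tail estimate reduces to a pointwise-phase question handled by dominated convergence. The only mild care needed is to check that the branch of $(it)^{d/2}$ used in the kernel and in the definition of $A(t)$ match, so that no spurious phase remains when writing $A(t) = M(t)D(t)\F$.
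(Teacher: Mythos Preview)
Your proposal is correct and follows exactly the same route as the paper: the factorisation $U_0(t)=M_tD_t\F M_t$, the identification $A(t)=M_tD_t\F$, unitarity of each factor on $L^2$, and dominated convergence applied to $(e^{i|x|^2/(2t)}-1)f$. You simply spell out a few more details (deriving the factorisation from the kernel and making the dominating function explicit) than the paper's sketch does.
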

The lemma is an easy consequence of the factorisation
\begin{equation*}
  U_0(t) = M_t D_t {\mathcal F} M_t,
\end{equation*}
where $M_t$ stands for the multiplication by the function
$e^{i\frac{\lvert  x\rvert^2}{2t}}$, ${\mathcal F}$ is the Fourier
transform defined in  \eqref{eq:fourier}, and $D_t$ is the dilation
operator 
\begin{equation*}
      \(D_t f\)(x) = \frac{1}{(it)^{d/2}}f\(\frac{x}{t}\).
\end{equation*}
These three operators are unitary on $L^2$, and the lemma follows from
the Dominated Convergence Theorem, noticing that $A(t) = M_t D_t {\mathcal F}$.

The sharp decay in time of linear solutions is classical and recalled in
the following lemma:
\begin{lemma}\label{lem:disp-lin}
  There exists $C$ such that for all $f\in \Sigma$, and all $2\le
  p<\frac{2d}{(d-2)_+}$, 
  \begin{equation*}
    \|U_0(t) f\|_{L^p(\R^d)}\le
    \frac{C}{\<t\>^{d\(\frac{1}{2}-\frac{1}{p}\)}}\|f\|_{\Sigma},\quad
      \forall t\in \R.
  \end{equation*}
\end{lemma}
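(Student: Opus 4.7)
The plan is to establish the bound in two regimes, $|t|\le 1$ and $|t|\ge 1$, glued together via the fact that $\langle t\rangle\sim 1$ on the first and $\langle t\rangle\sim |t|$ on the second.

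For $|t|\le 1$, the prefactor $\langle t\rangle^{-d(1/2-1/p)}$ is bounded below by a positive constant, so it suffices to bound $\|U_0(t)f\|_{L^p}$ by $\|f\|_\Sigma$ uniformly. This is immediate from Sobolev embedding: in the range $2\le p<\frac{2d}{(d-2)_+}$ we have $H^1(\R^d)\hookrightarrow L^p(\R^d)$, and $U_0(t)$ is a unitary group on $H^1$, so $\|U_0(t)f\|_{L^p}\lesssim \|U_0(t)f\|_{H^1}=\|f\|_{H^1}\le \|f\|_\Sigma$.

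For $|t|\ge 1$, the key is to exploit the factorisation $U_0(t)=M_t D_t \mathcal F M_t$ recalled just after Lemma~\ref{lem:tsutsumi}. Since $|M_t|\equiv 1$, multiplication by $M_t$ is an $L^p$-isometry, while the dilation $D_t$ satisfies
\begin{equation*}
\|D_t g\|_{L^p(\R^d)}=|t|^{-d(1/2-1/p)}\|g\|_{L^p(\R^d)}.
\end{equation*}
Combining these,
\begin{equation*}
\|U_0(t)f\|_{L^p}=|t|^{-d(1/2-1/p)}\|\mathcal F(M_t f)\|_{L^p}.
\end{equation*}
It remains to control $\|\mathcal F(M_t f)\|_{L^p}$ uniformly in $t$ by $\|f\|_\Sigma$. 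Sobolev embedding again gives $\|\mathcal F(M_t f)\|_{L^p}\lesssim \|\mathcal F(M_t f)\|_{H^1}$, and I would then use Plancherel and the standard identity $\nabla \mathcal F h=-i\mathcal F(xh)$ to get
\begin{equation*}
\|\mathcal F(M_t f)\|_{H^1}^2=\|M_t f\|_{L^2}^2+\|x M_t f\|_{L^2}^2=\|f\|_{L^2}^2+\||x|f\|_{L^2}^2\le \|f\|_\Sigma^2,
\end{equation*}
where crucially the factor $M_t=e^{i|x|^2/(2t)}$ disappears because we only take moduli. This yields $\|U_0(t)f\|_{L^p}\le C|t|^{-d(1/2-1/p)}\|f\|_\Sigma$ for $|t|\ge 1$, and since $\langle t\rangle\le \sqrt 2\,|t|$ in that regime, the claimed bound follows.

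There is no serious obstacle: the proof is a routine application of the factorisation formula and Sobolev embedding. The only points requiring minor care are (i) checking the range of $p$: Sobolev embedding $H^1\hookrightarrow L^p$ holds for $p<\infty$ when $d\in\{1,2\}$ and for $p\le 2d/(d-2)$ when $d\ge 3$, which is exactly the hypothesis $p<2d/(d-2)_+$; and (ii) the switch from $|t|$ to $\langle t\rangle$, handled by splitting at $|t|=1$.
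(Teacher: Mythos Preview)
Your proof is correct. It differs from the paper's in the tools used for the large-time estimate. The paper applies the Gagliardo--Nirenberg inequality together with the vector field $J(t)=x+it\nabla$, exploiting the factorisation $J(t)f=it\,e^{i|x|^2/(2t)}\nabla\bigl(f\,e^{-i|x|^2/(2t)}\bigr)$ and the conjugation identity $J(t)=U_0(t)\,x\,U_0(-t)$ to obtain $\|U_0(t)f\|_{L^p}\lesssim |t|^{-\delta(p)}\|f\|_{L^2}^{1-\delta(p)}\|xf\|_{L^2}^{\delta(p)}$. You instead unpack the $M_tD_t\mathcal F M_t$ factorisation directly, extract the $|t|^{-\delta(p)}$ from the dilation, and then control $\|\mathcal F(M_tf)\|_{L^p}$ by Sobolev embedding and Plancherel. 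Your route is marginally more elementary (it avoids Gagliardo--Nirenberg) and yields the stated $\|f\|_\Sigma$ bound in one stroke; the paper's route gives a slightly sharper interpolation-type bound and, more importantly, introduces the operator $J(t)$, which is reused later in the proof of \eqref{eq:center} and again in Section~\ref{sec:short}. The two arguments are of course close cousins, since the $J(t)$ factorisation is precisely the infinitesimal shadow of the $M_tD_t\mathcal F M_t$ formula.
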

\begin{proof}
  The tools and properties evoked in this proof can be found in
  e.g. \cite{CazCourant} or \cite{GinibreDEA}. 
  Gagliardo-Nirenberg inequality yields
  \begin{equation*}
      \|U_0(t) f\|_{L^p(\R^d)}\lesssim
      \|U_0(t)f\|_{L^2(\R^d)}^{1-\delta(p)}
\|\nabla U_0(t)f\|_{L^2(\R^d)}^{\delta(p)},\quad
\delta(p):=d\(\frac{1}{2}-\frac{1}{p}\),
  \end{equation*}
and since $U_0(t)$ is unitary on $L^2$ and commutes with $\nabla$,
 \begin{equation*}
      \|U_0(t) f\|_{L^p(\R^d)}\lesssim
      \|f\|_{L^2(\R^d)}^{1-\delta(p)}
\|\nabla f\|_{L^2(\R^d)}^{\delta(p)},\quad
\forall t\in \R.
  \end{equation*}
For $|t|>1$, we recall that the operator $J(t)=x+it\nabla$ can be
factorised as 
\begin{equation*}
  J(t) f=  it\, e^{i\frac{|x|^2}{2t}}\nabla \(f\,
  e^{-i\frac{|x|^2}{2t}}\), 
\end{equation*}
so  Gagliardo-Nirenberg inequality also yields
\begin{equation*}
  \|f\|_{L^p(\R^d)}\lesssim \frac{1}{|t|^{\delta(p)}}  \|f\|_{L^2(\R^d)}^{1-\delta(p)}
\|J(t) f\|_{L^2(\R^d)}^{\delta(p)}.
\end{equation*}
Moreover, $J(t) = U_0(t)x U_0(-t)$, so, since $U_0(t)$ is unitary on
$L^2$,
\begin{equation*}
   \|U_0(t)f\|_{L^p(\R^d)}\lesssim \frac{1}{|t|^{\delta(p)}}  \|f\|_{L^2(\R^d)}^{1-\delta(p)}
\|x f\|_{L^2(\R^d)}^{\delta(p)},
\end{equation*}
hence the lemma. 
\end{proof}

\subsection{Proof of  Theorem~\ref{theo:identities}}
\label{sec:direct}

The property
\begin{equation*}
  \|U_0(-t)u(t)-u_\pm\|_\Sigma\Tend t {\pm \infty} 0,
\end{equation*}
together with
Lemma~\ref{lem:tsutsumi}, Sobolev embedding, and
Lemma~\ref{lem:disp-lin}, then easily yield \eqref{eq:mass},
\eqref{eq:energy} and \eqref{eq:moment0}. 

Formally, the centre of mass 
  \begin{equation*}
    \overline x(t)= \int_{\R^d}x|u(t,x)|^2dx
  \end{equation*}
evolves according to 
\begin{align*}
  \frac{d}{dt}   \overline x(t) &= 2\RE \int_{\R^d}x\overline
  u(t,x)\d_t u(t,x) dx=-\IM \int_{\R^d}x\overline
  u(t,x) \Delta u(t,x) dx\\
& = \IM \int_{\R^d}\overline
  u(t,x) \nabla u(t,x) dx=P,
\end{align*}
and so its derivative is constant. This identity can be made rigorous
by resuming the argument given in the proof of \cite[Lemma~6.5.2]{CazCourant}, which
consists, in the present case, in considering as an intermediate quantity
\begin{equation*}
   \overline x_\eps(t)= \int_{\R^d}x e^{-\eps|x|^2}|u(t,x)|^2dx.
\end{equation*}
Let
\begin{equation*}
  z(t) = \RE\int_{\R^d}\overline u(t,x)J(t)u(t,x)dx
  = \int_{\R^d}\overline u(t,x)J(t)u(t,x)dx,
\end{equation*}
where $J(t)=x+it\nabla$ was introduced in the proof of
Lemma~\ref{lem:disp-lin}. The reason why the imaginary part of the
above integral is zero is straightforward, by writing
\begin{equation*}
   \int_{\R^d}\overline u(t,x)J(t)u(t,x)dx=  \int_{\R^d}x \overline
   u(t,x)u(t,x)dx+  it \int_{\R^d}\overline u(t,x)\nabla u(t,x)dx.
\end{equation*}
More precisely, expanding,
\begin{equation*}
  z(t) = \overline x(t)-t P=\overline x(0)=\int_{\R^d}x|u_0(x)|^2dx.
\end{equation*}
On the other hand, the factorisation
$J(t) = U_0(t)x U_0(-t)$ and the assumption $\|U_0(-t)u(t)-
u_\pm\|_\Sigma \to 0$ yield, with $\<f,g\>=\int f\overline g$, and since
$U_0(t)^*=U_0(-t)$, 
\begin{align*}
  z(t)  & =  \int_{\R^d}
          \overline{u(t,x)}U_0(t)xU_0(-t)u(t,x)dx=\<U_0(t)xU_0(-t)
          u(t),u(t)\>\\
  &=\<xU_0(-t)u(t),U_0(-t)u(t)\>
  \Tend t {\pm \infty} \<x u_\pm,u_\pm\> = \int_{\R^d} x |u_\pm(x)|^2dx,
\end{align*}
hence \eqref{eq:center}.
\bigbreak

To prove \eqref{eq:moment}, we use the lens transform \eqref{eq:lens}.
We first emphasise
the fact that the lens transform yields an alternative proof of
\eqref{eq:moment0}-\eqref{eq:center}, since, like in the linear case
(discovered in \cite{Ehrenfest}), $v$, solution to
\eqref{eq:nlsharmo-gen}, satisfies
\begin{align*}
 & \frac{d}{dt}\IM \int_{\R^d} \overline v(t,x)\nabla v(t,x)dx =
  \int_{\R^d} x |v(t,x)|^2dx,\\ 
&\frac{d}{dt} \int_{\R^d} x
  |v(t,x)|^2dx= \IM \int_{\R^d} \overline v(t,x)\nabla v(t,x)dx . 
\end{align*}
This implies
\begin{align*}
 & \IM \int_{\R^d} \overline v(t,x)\nabla v(t,x)dx=\cos t  \times\IM \int_{\R^d}
  \overline u_0(x)\nabla u_0(x)dx +\sin t \times\int_{\R^d} x
   |u_0(x)|^2dx,\\
&\int_{\R^d} x |v(t,x)|^2dx= \cos t \times \int_{\R^d} x
   |u_0(x)|^2dx + \sin t \times\IM \int_{\R^d}
  \overline u_0(x)\nabla u_0(x)dx ,
\end{align*}
so \eqref{eq:moment0}-\eqref{eq:center} follow from Plancherel equality
and Lemma~\ref{lem:waveharmo}, since
\begin{equation*}
   \IM \int_{\R^d} \overline v(t,x)\nabla v(t,x)dx =\frac{1}{i}
   \int_{\R^d} \overline v(t,x)\nabla v(t,x)dx . 
\end{equation*}
\smallbreak

In the case $\si=2/d$, \eqref{eq:nlsharmo-gen} is autonomous, 
\begin{equation}
  \label{eq:nlsharmo}
  i\d_t v +\frac{1}{2}\Delta v = \frac{\lvert x\rvert^2}{2} v + \lvert
  v\rvert^{4/d}v,\quad v_{\mid t=0} = u_{\mid t=0}=u_0,
\end{equation}
and 
we know that if $v_{\mid t=0}\in \Sigma$, then \eqref{eq:nlsharmo} has
a unique, global solution $v\in \C(\R;\Sigma)\cap C^1(\R;\Sigma^*)$ (see
\cite[Section~9.2]{CazCourant} or \cite{CaAHP}).

\begin{lemma}[From Lemma~3.1 in \cite{CaAHP}]
  Let $u_0\in \Sigma$: the solution $v\in C(\R;\Sigma)$ to
  \eqref{eq:nlsharmo} satisfies
  \begin{align}
    &\frac{d}{dt}\( \| xv(t)\sin t-i\cos t \nabla v(t)\|_{L^2}^2
      +\frac{2d}{2+d}\cos^2t\|v(t)\|_{L^{2+4/d}}^{2+4/d}\)
      =0, \label{eq:evol1}\\ 
   &\frac{d}{dt}\( \| xv(t)\cos t+i\sin t\nabla v(t)\|_{L^2}^2
      + \frac{2d}{2+d}\sin^2t\|v(t)\|_{L^{2+4/d}}^{2+4/d}\)
     =0. \label{eq:evol2} 
  \end{align}
\end{lemma}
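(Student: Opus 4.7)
The plan is to reduce both identities to the classical pseudo-conformal conservation law for the $L^2$-critical defocusing NLS, using the lens transform, together with a symmetry argument that handles the second identity for free. Introduce the operators
\begin{equation*}
A(t) := x\cos t + i\sin t\,\nabla, \qquad B(t) := x\sin t - i\cos t\,\nabla,
\end{equation*}
so that \eqref{eq:evol2} is the conservation of $\|A(t)v(t)\|_{L^2}^2 + \tfrac{2d}{d+2}\sin^2 t\,\|v(t)\|_{L^{2+4/d}}^{2+4/d}$ and \eqref{eq:evol1} is the conservation of $\|B(t)v(t)\|_{L^2}^2 + \tfrac{2d}{d+2}\cos^2 t\,\|v(t)\|_{L^{2+4/d}}^{2+4/d}$. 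Since \eqref{eq:nlsharmo} is autonomous, the time-shifted function $\tilde v(t) := v(t - \pi/2)$ is again a global solution in $\Sigma$, and the elementary operator identity $A(t) = -B(t - \pi/2)$ combined with $\sin^2 t = \cos^2(t-\pi/2)$ shows that \eqref{eq:evol2} applied to $\tilde v$ is exactly \eqref{eq:evol1} for $v$. Thus it suffices to prove \eqref{eq:evol2}.

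To prove \eqref{eq:evol2} I would invert the lens transform. Writing $\tau$ for the time variable of $v$ and $t = \tan\tau$ for that of $u$, relation \eqref{eq:lens} gives
\begin{equation*}
u(\tan\tau, x) = (\cos\tau)^{d/2}\,v(\tau, x\cos\tau)\,e^{i|x|^2\sin\tau\cos\tau/2},
\end{equation*}
and $u$ solves \eqref{eq:nls} with $\sigma = 2/d$. For this equation the pseudo-conformal law of \cite{GV79Scatt} states that
\begin{equation*}
\|J(t)u(t)\|_{L^2}^2 + \frac{2d}{d+2}\,t^2\,\|u(t)\|_{L^{2+4/d}}^{2+4/d}
\end{equation*}
is independent of $t$, where $J(t) := x + it\nabla$. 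A direct change of variable $y = x\cos\tau$ in each of the two integrals then yields
\begin{equation*}
\|u(\tan\tau)\|_{L^{2+4/d}}^{2+4/d} = \cos^2\tau\,\|v(\tau)\|_{L^{2+4/d}}^{2+4/d}, \qquad \|J(\tan\tau)u(\tan\tau)\|_{L^2}^2 = \|A(\tau)v(\tau)\|_{L^2}^2,
\end{equation*}
and using $\tan^2\tau\cdot\cos^2\tau = \sin^2\tau$, the conserved NLS quantity is transported into exactly the left-hand side of \eqref{eq:evol2}.

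The only genuine computation is the second identity above. The point is that when one differentiates $u$ in $x$ through the quadratic phase $e^{i|x|^2\sin\tau\cos\tau/2}$, the gradient of $u$ acquires a contribution proportional to $ix\sin\tau\cos\tau\cdot u$; multiplied by $it = i\tan\tau$ and added to $xu$, the $x$-prefactor collapses to $x(1 - \sin^2\tau) = x\cos^2\tau$, which under $y = x\cos\tau$ becomes exactly $y\cos\tau$, while the remaining gradient piece becomes $i\sin\tau\,\nabla_y v(\tau, y)$, assembling precisely $A(\tau)v(\tau, y)$. I do not expect any regularity obstacle: by \cite[Section~9.2]{CazCourant} the flow of \eqref{eq:nlsharmo} preserves $\Sigma$ globally in time, and the lens transform is an isomorphism $\Sigma \to \Sigma$, so every integral encountered is absolutely convergent and every formal manipulation is justified.
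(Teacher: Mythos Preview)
Your argument is correct. The paper itself does not prove this lemma but cites it from \cite{CaAHP}, where the proof proceeds by direct differentiation of the two quantities and integration by parts against \eqref{eq:nlsharmo}. Your route is different and in some sense dual to the paper's logic: rather than proving \eqref{eq:evol1}--\eqref{eq:evol2} directly and then deducing \eqref{eq:moment} through the lens transform (as the paper does), you import the pseudo-conformal law of \cite{GV79Scatt} for \eqref{eq:nls} and pull it back through the lens transform to obtain \eqref{eq:evol2}, then exploit the autonomy of \eqref{eq:nlsharmo} to get \eqref{eq:evol1} by a $\pi/2$ time shift. The operator identity $\|A(\tau)v(\tau)\|_{L^2}=\|J(\tan\tau)u(\tan\tau)\|_{L^2}$ that you need is in fact derived verbatim in Section~\ref{sec:short} of the paper, so your computation is fully consistent with the surrounding material. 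Compared with the direct proof, your approach is shorter and conceptually clean (one known conservation law plus a symmetry), at the cost of relying on an external input; the direct proof is self-contained and works uniformly in $t\in\R$.

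One small point to make explicit: the lens transform only relates $v$ on $(-\pi/2,\pi/2)$ to $u$ on $\R$, so your argument as written establishes \eqref{eq:evol2} only on that interval. You already have the fix in hand: since \eqref{eq:nlsharmo} is autonomous and its $\Sigma$-flow is global, applying your proof to the translate $v(\cdot+t_0)$ gives \eqref{eq:evol2} on $(t_0-\pi/2,t_0+\pi/2)$ for every $t_0\in\R$, hence on all of $\R$; the same remark then covers \eqref{eq:evol1}.
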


We can now complete the proof of Theorem~\ref{theo:identities}.
  For $u_-\in \Sigma$, let $u\in C(\R;\Sigma)$ be
  given by the wave operator, with $u_{\mid t=0}=u_0$. We define $v$ as the solution to
  \eqref{eq:nlsharmo}. Considering the constant of motion appearing in
  \eqref{eq:evol1} at times $t=-\pi/2$, $0$ and $\pi/2$, we get
  \begin{equation*}
    \left\|x v\(-\frac{\pi}{2}\)\right\|_{L^2}^2 =
    \|\nabla u_0\|_{L^2}^2 + \frac{2d}{2+d}\|u_0\|_{L^{2+4/d}}^{2+4/d}
   =\left\|x v\( \frac{\pi}{2}\)\right\|_{L^2}^2.
 \end{equation*}
 Using Lemma~\ref{lem:waveharmo}, this yields \eqref{eq:energy}.

 Similarly, considering the constant of motion appearing in
 \eqref{eq:evol2} at times $t=-\pi/2$, $0$ and $\pi/2$,
   \begin{align*}
    \left\|\nabla v\(-\frac{\pi}{2}\)\right\|_{L^2}^2 +
     \frac{2d}{2+d}\left\|v\(-\frac{\pi}{2}\)
     \right\|_{L^{2+4/d}}^{2+4/d} &=\|x u_0\|_{L^2}^2\\
     &=
  \left\|\nabla v\(\frac{\pi}{2}\)\right\|_{L^2}^2 +
    \frac{2d}{2+d}\left\|v\(\frac{\pi}{2}\)\right\|_{L^{2+4/d}}^{2+4/d}
  , 
 \end{align*}
and Lemma~\ref{lem:waveharmo}  yields \eqref{eq:moment}.

\subsection{Rotating points for the scattering operator in the
  $L^2$-critical case}
\label{sec:rotat}

We recall the main result from \cite{CaDPDE}, and sketch the main arguments
of the proof:
\begin{proposition}[From \cite{CaDPDE}]\label{prop:CaDPDE}
  Let $d\ge 1$ and $\si=2/d$ in \eqref{eq:nls}. For any $\theta\in
  [0,2\pi)$, there exist infinitely many functions $u_-\in \Sigma$
  such that $S(u_-)=e^{i\theta}u_-$. 
\end{proposition}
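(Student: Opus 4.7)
The plan is to use the lens transform to turn the identity $S(u_-)=e^{i\theta}u_-$ into a nonlinear elliptic eigenvalue problem, and then to produce infinitely many solutions variationally. Since $\si=2/d$, \eqref{eq:nlsharmo-gen} coincides with the autonomous equation \eqref{eq:nlsharmo}, which admits standing waves. I would make the ansatz
\begin{equation*}
  v(t,x)=e^{-i\lambda t}\phi(x),\qquad \phi:\R^d\to\R\ \text{radial},
\end{equation*}
which reduces \eqref{eq:nlsharmo} to the elliptic equation
\begin{equation*}
  -\frac{1}{2}\Delta\phi+\frac{|x|^2}{2}\phi+|\phi|^{4/d}\phi=\lambda\phi.
\end{equation*}
Evaluating $v$ at $t=\pm\pi/2$ and feeding the result into Lemma~\ref{lem:waveharmo} expresses ${\mathcal F}(u_\pm)$ as $\phi$ times explicit phases depending only on $\lambda$, $d$ and the sign of $t$. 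Comparing these phases via $u_+=e^{i\theta}u_-$ and using that $\phi$ is even collapses the rotating-point identity to the single scalar constraint
\begin{equation*}
  \lambda\equiv\frac{d}{2}-\frac{\theta}{\pi}\pmod 2.
\end{equation*}
It therefore suffices to construct, for each $\theta$, infinitely many nontrivial radial $\phi\in\Sigma$ solving the elliptic equation with eigenvalue $\lambda$ in the arithmetic progression $\{d/2-\theta/\pi+2k:k\ge 1\}$.

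The core construction is variational: for fixed mass $m>0$, I would minimise
\begin{equation*}
  E(\phi)=\frac{1}{2}\int_{\R^d}|\nabla\phi|^2+\frac{1}{2}\int_{\R^d}|x|^2|\phi|^2+\frac{d}{2(d+2)}\int_{\R^d}|\phi|^{2+4/d}
\end{equation*}
over $\{\phi\in\Sigma:\|\phi\|_{L^2}^2=m,\ \phi\text{ radial}\}$. The harmonic confinement makes $\Sigma$ compactly embedded in the relevant $L^p$ spaces, which tames the $L^2$-critical nonlinearity and yields a radial minimiser $\phi_m$ solving the elliptic equation with Lagrange multiplier $\lambda_m$. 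A continuity/monotonicity argument then gives $\lambda_m\to d/2$ (the bottom of the spectrum of $-\frac{1}{2}\Delta+\frac{|x|^2}{2}$) as $m\to 0^+$ and $\lambda_m\to+\infty$ as $m\to+\infty$. Consequently, each level $d/2-\theta/\pi+2k$ with $k\ge 1$ is realised at some mass $m_k$; the resulting $\phi_{m_k}$ have pairwise distinct $L^2$-norms, and so produce an infinite family of asymptotic states $u_-\in\Sigma$ satisfying $S(u_-)=e^{i\theta}u_-$.

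The main obstacle is the critical exponent $\si=2/d$: without a confining potential, $L^2$-constrained minimisation of this energy fails to be bounded below beyond the Weinstein threshold. The harmonic oscillator restores coercivity and compactness on the radial subspace of $\Sigma$, but this rescue requires careful quantitative control of Gagliardo--Nirenberg in presence of the quadratic weight. A secondary technical point is to show that $m\mapsto\lambda_m$ is continuous and surjective onto $(d/2,+\infty)$; the delicate subpoints there are existence of a minimiser at every mass (not merely below the Weinstein threshold) and ruling out concentration at the origin or escape to infinity.
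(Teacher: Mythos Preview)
Your reduction via the lens transform to the elliptic equation
\[
  -\tfrac{1}{2}\Delta\phi+\tfrac{|x|^2}{2}\phi+|\phi|^{4/d}\phi=\lambda\phi,\qquad
  \lambda\in\Bigl\{\tfrac{d}{2}-\tfrac{\theta}{\pi}+2k:k\ge 1\Bigr\},
\]
is correct and coincides with the paper's argument. The difference lies in the variational step. The paper fixes $\nu>d/2$ from the outset and minimises the indefinite quadratic form $I(\psi)=\langle H\psi,\psi\rangle-\nu\|\psi\|_{L^2}^2$ under the \emph{nonlinear} constraint $\|\psi\|_{L^{2+4/d}}^{2+4/d}=\text{const}$. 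Since $\nu$ exceeds the bottom of the spectrum of $H$, the infimum is negative; compactness of the resolvent of $H$ gives a minimiser, and the sign of the Lagrange multiplier together with homogeneity of the nonlinearity immediately yields a solution with exactly the prescribed $\nu$. Each admissible $\nu$ is hit directly, with no further argument.

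Your route instead constrains the $L^2$-mass and recovers $\lambda$ as a Lagrange multiplier, then needs the map $m\mapsto\lambda_m$ to be continuous and to sweep $(d/2,+\infty)$. Two remarks. First, your stated ``main obstacle'' is not one: the nonlinearity here is defocusing, so $E(\phi)\ge\tfrac{1}{2}\langle H\phi,\phi\rangle\ge\tfrac{d}{4}m$ for every $m>0$, and coercivity, boundedness below, and existence of a minimiser are immediate at all masses; the Weinstein threshold is irrelevant. Second, the genuine cost of your approach is precisely the surjectivity of $m\mapsto\lambda_m$, which you flag as ``secondary'' but which is the only nontrivial step left; proving $\lambda_m\to+\infty$ as $m\to\infty$ requires an additional argument (e.g.\ a pointwise bound at the maximum of the positive radial minimiser giving $\lambda_m\ge\|\phi_m\|_{L^\infty}^{4/d}$, combined with growth of $\|\phi_m\|_{L^\infty}$). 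The paper's constraint swap sidesteps this entirely: by normalising in $L^{2+4/d}$ and letting $\nu$ be the free parameter, one solves for each target eigenvalue in a single minimisation, with no continuity or surjectivity analysis needed.
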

As \eqref{eq:nls} is invariant under the gauge transform $u\mapsto
e^{i\omega}u$ for any constant $\omega\in \R$, we emphasise that the above
statement holds even modulo this equivalence relation. The proof
actually allows to consider $u_0=W_-u_-(=\psi$ below) real-valued. 
\begin{proof}[Sketch of the proof]
  Using the lens transform \eqref{eq:lens}, Lemma~\ref{lem:waveharmo}
  shows  that it suffices to  construct a solution $v$ to
  \eqref{eq:nlsharmo-gen} such that
  \begin{equation*}
    v\(\frac{\pi}{2},x\) = e^{i\theta}
      e^{-id\pi/2}v\(-\frac{\pi}{2},-x\),\quad \forall x\in \R^d.
  \end{equation*}
Since  \eqref{eq:nlsharmo-gen} is autonomous in the $L^2$-critical
case $\si=2/d$, we can seek such a special solution of the form
$v(t,x) = e^{-i\nu t} \psi(x)$, where $\psi$ is a radial solution to
the elliptic problem
\begin{equation}\label{eq:ell-crit}
  \nu \psi = H\psi+|\psi|^{4/d}\psi,\quad
  H=-\frac{1}{2}\Delta+\frac{|x|^2}{2}, 
\end{equation}
and $\nu$ belongs to the set
\begin{equation*}
  \left\{ \frac{d}{2}-\frac{\theta}{\pi}+2j,\quad
    j\in\N\setminus\{0\}\right\}. 
\end{equation*}
It is then easy to see that for any $\nu>d/2$ and any
$0<\si<\frac{2}{(d-2)_+}$, there exists a radial solution $\psi\in
  \Sigma$ to 
  \begin{equation}\label{eq:ell-gen}
    \nu \psi = H\psi+|\psi|^{2\si}\psi,
  \end{equation}
by minimising the action of the linear case
\begin{equation*}
  I(\psi) = \<H\psi,\psi\>-\nu\<\psi,\psi\>,
\end{equation*}
under the nonlinear constraint 
\begin{equation*}
  \psi\in M=\left\{ \psi\in \Sigma,\ \psi(x)=\psi(|x|)\ ; \
    \frac{1}{1+\si}\int_{\R^d} |\psi(x)|^{2+2\si}=1\right\}.
\end{equation*}
The uncertainty principle and the compactness of the resolvent of $H$
imply that the infimum is negative, and achieved by a function $\tilde
\psi\in M$. One concludes by examining the sign of the Lagrange
multiplier and using the homogeneity of the nonlinearity. 
\end{proof}

We emphasise that the elliptic argument does not use the fact that the
nonlinearity is $L^2$-critical, as we solve \eqref{eq:ell-gen}. This
property is crucial however to 
reduce the proof of Proposition~\ref{prop:CaDPDE} to the construction
of a (radial) solution to \eqref{eq:ell-gen}, since if $\si\not =
2/d$, the nonlinearity in the equation obtained after the lens transform,
\eqref{eq:nlsharmo-gen}, depends on time, and it is hopeless to seek a
special solution in the form of a solitary wave $v(t,x) = e^{-i\nu
  t} \psi(x)$. It is therefore unclear whether the statement of
Proposition~\ref{prop:CaDPDE} can be extended to other powers than
$\si=2/d$ or not. One might think for instance of an argument based on 
Leray-Schauder degree theory (see e.g. \cite{Cronin,Mawhin}), to
change the power of $\cos t$ from zero 
(like in the proof of Proposition~\ref{prop:CaDPDE}) to $d\si-2$ like
in \eqref{eq:nlsharmo-gen}. The numerical simulations that we present
in Section~\ref{sec:num} suggest that this strategy may not be
successful, and that Proposition~\ref{prop:CaDPDE} may be bound to the
$L^2$-critical case.

\subsection{Long range scattering}
\label{sec:long-range}

In this section, we discuss only the one-dimensional case $d=1$. As
recalled in the introduction, when $\si=1$ (cubic nonlinearity),
\begin{equation}
  \label{eq:cubic1D}
    i\d_t u +\frac{1}{2}\d_{x}^2u = |u|^2u,\quad x\in \R, 
\end{equation}
the
solution $u$ cannot behave asymptotically like the
linear evolution of a scattering state $u_+$, unless we deal with the
zero solution. The large time dynamics of $u$ becomes more complicated
to describe than with the mere linear group $U_0(t)$.
Without entering into details, we summarise the main results from
e.g. \cite{HN98,HN06,KatoPusateri2011,Ozawa91}.
\smallbreak

Given asymptotic states $u_-$ and $u_+$, the long range phase
corrections (different whether $t\to -\infty$ of $t\to +\infty$) are
given by
\begin{equation}
  \label{eq:mod-phase}
  \Phi_\pm(t,x):=\mp \left| \widehat{u}_\pm\left( \frac{x}{t}
  \right)\right|^2 \log |t|.
\end{equation}
As proven initially in \cite{Ozawa91},  given $u_-$ (sufficiently
smooth, localised, and small), there exists a unique solution $u$ to
\eqref{eq:nls} such that   
\begin{equation*}
  u(t,x)\Eq t {-\infty} e^{i\Phi_-(t,x)}U_0(t)u_-(x)\Eq t {-\infty}
  e^{i\Phi_-(t,x)+i\frac{x^2}{2t}} \frac{1}{(it)^{1/2}}\widehat 
  u_-\(\frac{x}{t}\).
\end{equation*}
Note
that the phase modification $\Phi_-$ is by no means negligible 
as $t\to -\infty$: it encodes long range effects. The above mentioned
smallness assumption was removed very recently, in \cite{KM-p}, under
various (limited) smoothness assumptions. 
\smallbreak

The modified asymptotic completeness is similar. As proven initially
in \cite{HN98}, given $u_0$
(sufficiently smooth, localised, and small), there exists $u_+$ such
that the solution $u$ to 
\eqref{eq:cubic1D} with $u_{\mid t=0}=u_0$ satisfies
\begin{equation*}
  u(t)\Eq t {+\infty} e^{i\Phi_+(t)}U_0(t)u_+.
\end{equation*}
We note that the argument in the proof of \cite[Theorem~1.2]{HN98}
(see also \cite{KatoPusateri2011}) goes as follows:
\begin{itemize}
\item First, it is proven that, setting $f(t)  = U_0(-t)u(t)$, there
  exists $W\in L^\infty\cap L^2$ and $\beta>0$ such that
  \begin{equation*}
    \widehat f(t)\exp\( -i\int_1^t |\widehat u(s)|^2\frac{ds}{s}\)-
    W=\O\(t^{-\beta}\)\quad\text{as }t\to \infty, \text{ in }L^2\cap L^\infty.
  \end{equation*}
\item Then, it is established that there exists $\varphi\in L^\infty$
  such that
  \begin{equation*}
    \int_1^t |\widehat u(s)|^2\frac{ds}{s} - |W|^2\log t 
     -\varphi=\O\(t^{-\beta}\),\quad\text{in } L^\infty.
   \end{equation*}
\item In view of these two results,
one infers (using a refined version of Lemma~\ref{lem:tsutsumi}),
\begin{equation*}
  u(t,x)=\frac{1}{(it)^{1/2}}W\(\frac{x}{t}\) \exp\(
  i\frac{x^2}{2t} -i \left|W\(\frac{x}{t}\)\right|^2\log t
  -i\varphi\(\frac{x}{t}\)\)+\rho(t,x), 
\end{equation*}
with
\begin{equation*}
  \|\rho(t)\|_{L^2}=\O\(t^{-\beta}\),\quad
   \|\rho(t)\|_{L^\infty}=\O\(t^{-1/2-\beta}\).
\end{equation*}
\end{itemize}
In particular, Lemma~\ref{lem:tsutsumi} shows that the modified
asymptotic state $u_+$ is given by 
$\widehat u_+ = W e^{-i\varphi}$. We emphasise the fact that in the
argument from \cite{HN98} (or \cite{KatoPusateri2011}), the rate $\beta$ may depend on the size of
$u_0$.
\smallbreak

When using the lens transform, a feature of the cubic one-dimensional
case is that the cosine factor in the nonlinearity in
\eqref{eq:nlsharmo-gen} ceases to be integrable near $t=\pm
\pi/2$. Indeed, as the result by Barab \cite{Barab} evoked in the
introduction shows that one cannot have $u(t)\sim U_0(t)u_-$ as $t\to
-\infty$, unless $u=u_-\equiv 0$, Lemma~\ref{lem:waveharmo}
suggests that $v$ may indeed have some singularity as $t\to \pm
\pi/2$. This is consistent with the fact that a nontrivial phase shift
$\Phi_\pm$ must be introduced in order to define the modified
scattering theory in this case. 
\smallbreak

In view of the construction of $u_+$ described above, and of
Lemma~\ref{lem:waveharmo}, when dealing
with \eqref{eq:nlsharmo-gen} in the case $d=\si=1$, it is rather
natural to first consider 
\begin{equation*}
  w(t,x) = v(t,x) \exp \( i\int_0^t |v(s,x)|^2 \frac{ds}{\cos s}\).
\end{equation*}
Indeed, in view of \eqref{eq:lens},
Lemma~\ref{lem:tsutsumi},  and the construction of the modified
scattering operator, we have
\begin{align*}
 \int_0^t |v(s,x)|^2 \frac{ds}{\cos s}&\Eq t {\frac{\pi}{2}} \int_0^t
 |\widehat u_+(x)|^2 \frac{ds}{\cos s}\Eq t {\frac{\pi}{2}} |\widehat
                                       u_+(x)|^2\log\(\frac{\pi}{2}-t\)\\
&\Eq t {\frac{\pi}{2}} |\widehat u_+(x)|^2\log \tan t,
\end{align*}
as well as 
\begin{equation}\label{eq:v-long}
  v(t,x)\Eq t {\frac{\pi}{2}}\widehat u_+(x) e^{ -i |\widehat
  u_+(x)|^2 \log \tan t},
\end{equation}
which confirms the fact that $v$ is singular as $t\to \pi/2$, but
$w$ is not,
\begin{equation*}
  w(t,x)\Tend t {\frac{\pi}{2}}\widehat u_+(x).
\end{equation*}
We also check that $w$ solves
\begin{equation*}
  i\d_t w = \( Hv\) \exp \( i\int_0^t |v(s,x)|^2 \frac{ds}{\cos
    s}\)= \( Hv\) \exp \( i\int_0^t |w(s,x)|^2 \frac{ds}{\cos
    s}\),
\end{equation*}
where $H=-\frac{1}{2}\d_x^2+\frac{x^2}{2}$. As $w$ is continuous at
$t=\pm \pi/2$, this formulation could be expected to be more tractable
numerically, and compatible with iterative process. However, due to the interaction of $w$ with $v$ in the above formulation, our current numerical approach does not directly benefit from this observation. Instead, in order to
get stable numerical simulations, we have kept $v$ as the main object
of study. 

The case $d=\si=1$ turns out to enjoy a special feature, in the sense
that the equation is known to be completely integrable, after
\cite{ZS}. We note that in the critical case for long range scattering in
higher dimension, $\si=1/d$ for $d\ge 2$, no such property is
known. Many works have investigated the properties of the
one-dimensional cubic Schr\"odinger flow by using complete
integrability. Using specific techniques related to  inverse
scattering and Riemann–Hilbert problems, solutions are computed
numerically in \cite{TrogdonOlver}. However, the analysis of
scattering in the sense considered here is not addressed there, and
comparing the two numerical approaches as well as the corresponding
experiments is not straightforward. This is therefore a natural and challenging
question for future researches.

\section{Numerical computation of the scattering operator}
\label{sec:num}
\subsection{Methodology}\label{sec:methodology} As mentioned in Section~\ref{sec:approach} we can exploit the lens transform \eqref{eq:lens} to compactify time and localise solution values in space, thus providing the basis for efficient numerical computation of the scattering operator $S$ (cf. \eqref{eq:scattering_operator}). For this we can use the following procedure:
\begin{enumerate}[(I)]
	\item For a given initial condition $u_-$ we compute the initial state $v(-\pi/2,x)$ using \eqref{eq:initial_final_identities};\label{list:ouralgo1}
	\item We propagate this initial state forward to $t_{\mathrm{lens}}=\pi/2$ using \eqref{eq:nlsharmo-gen};\label{list:ouralgo2}
	\item Finally, we compute $S(u_-):=u_+$ using \eqref{eq:initial_final_identities}.\label{list:ouralgo3}
\end{enumerate}
A natural spatial discretisation for \eqref{eq:nlsharmo-gen} is a Hermite spectral method (cf. \cite{ThCaNe2009}) with the following $L^2(\R^d)$-orthonormal basis:
\begin{align*}
\mathcal{H}_m(x)=\prod_{j=1}^d\left(\mathrm{H}_{m_j}(x_j) \mathrm{e}^{-\frac{1}{2}x_j^2}\right), \quad m\in\N^{d},
\end{align*}
where $\mathrm{H}_{m_j}$ denotes the Hermite polynomial of degree $m_j\in\N$ normalised with respect to the weight $w(x)=\exp(-x^2)$. The $\mathcal{H}_m$ are eigenfunctions of the Schr\"odinger operator with Harmonic potential, $H$, with
\begin{align}\label{eq:eigenfunction_property}
	H (\mathcal{H}_m)=\left(\frac{d}{2}+\sum_{j=1}^d m_j\right) \mathcal{H}_m,
\end{align}
thus forming a convenient basis for a splitting approach to \ref{list:ouralgo2}. Moreover, this basis allows for the direct computation of $\|\cdot\|_{\Sigma}$: introducing for $k\in \N$,
\begin{equation*}
  \Sigma^k = \left\{ f\in L^2(\R^d),\quad
    \|f\|_{\Sigma^k}:=\|f\|_{H^k(\R^d)}+ \||x|^k f\|_{L^2(\R^d)}<\infty\right\},
\end{equation*}
we have the following equivalence of norms (see e.g. \cite{BCM08,Helffer1984})
\begin{equation}\label{eqn:details_sigma_norm_in_hermite_basis}
\|f\|_{\Sigma^k}^2\sim \sum_{m\in \N^d} \lambda_m^k |\alpha_m|^2,\quad
\lambda_m = \frac{d}{2}+\sum_{j=1}^d m_j,\quad
\text{for } f(x) = \sum_{m\in \N^d} \alpha_m  \mathcal{H}_m(x).
\end{equation}
We thus expand our initial conditions in this basis
\begin{align}\label{eq:expansion_of_u-}
	u_-(x)\approx\sum_{\substack{m\in\N^d\\0\le m\le
  M-1}} \alpha_m^-\mathcal{H}_m(x), 
\end{align}
{where $M\geq 1$ is the spectral truncation parameter,} and we note that the basis $\mathcal{H}_m$ satisfies a range of useful
identities which facilitate the quick computation of Fourier
transforms and derivatives of functions of the form
\eqref{eq:expansion_of_u-}. In particular, there is a numerically
stable way of transforming between Hermite coefficients and function
values introduced by \cite{bunck09} and explained in further detail in
\cite{maierhoferwebb25} (cf. also the implementation in Julia in
\cite{QuantumTimeSteppers,FastGaussQuadrature,FastTransforms}) which
is briefly recalled for completeness in
Appendix~\ref{app:hermite_transform}. Some further identities relevant
to the computations conducted in this work are listed in
Appendix~\ref{app:hermit_identities}. As a result of these identities
and of \eqref{eq:eigenfunction_property}, this basis lends itself
naturally to the integration of \eqref{eq:nlsharmo-gen}. In order to
perform step \ref{list:ouralgo2} above, we will thus resort to
Lie-splitting of \eqref{eq:nlsharmo-gen}, by considering the following
two subproblems 
\begin{align}\label{eq:splitting}
	i\partial_t v=Hv,\quad i\partial_t v=(\cos t)^{d\sigma-2}|v|^{2\sigma}v.
\end{align}
Note that there is a second natural choice for the splitting in
\eqref{eq:splitting}, which is to combine the nonlinearity and the
harmonic potential in one subproblem and keep the Laplacian part of
the linear term separate. This second approach lends itself naturally
to a Fourier pseudospectral spatial discretisation, whereby one has to
choose an appropriate bounding box for the domain $\R^d$. In
principle, it was observed by \cite{ThCaNe2009} that the numerical
performances of both methods are comparable, but we note that in our
case it is crucial to localise dispersive effects for the reliable
numerical computation of solutions as $t\rightarrow \pm\infty$ and
thus it is natural to use \eqref{eq:splitting}. 
\begin{remark}\label{rem:inhom2}
  In the case of an inhomogeneous nonlinearity \eqref{eq:inhom}, the
  second equation in \eqref{eq:splitting} becomes, after lens
  transform,
  \begin{equation}\label{eqn:nonlinear_part_inhomogeneous_case}
    i\partial_t v=(\cos t)^{d\sigma-2}a\( \frac{x}{\cos t}\) |v|^{2\sigma}v.
  \end{equation}
  Like in the classical case of splitting methods for \eqref{eq:nls},
  we note that the solution to the above ODE satisfies $\d_t|v|^2 =
  0$ provided that $a$ is real-valued, and so the equation is actually
  a \emph{linear} ODE and it is solved
  by time integration, with the exact solution of \eqref{eqn:nonlinear_part_inhomogeneous_case} being given by
  \begin{align*}
  	v(t,x)=e^{-i\int_0^t(\cos s)^{d\sigma-2}a\( \frac{x}{\cos s}\) ds |v(0,x)|}v(0,x), \quad x\in\mathbb{R}.
  \end{align*}
  In the homogeneous case the integral simplifies to $\int_0^t(\cos s)^{d\sigma-2}ds$ and requires only \textit{one} evaluation per time step. For the inhomogeneous case, the above method can still be applied, but requires $M$ evaluations of such integrals where $M$ is the spectral truncation parameter, thus leading to a substantial increase in cost.
\end{remark}
\subsubsection{Error analysis}\label{sec:error_analysis}

Using the identities in Appendix~\ref{app:hermit_identities}, the
Fourier transforms in steps \ref{list:ouralgo1} \& \ref{list:ouralgo3}
can be performed exactly, so the only error incurred in our method for
computing $S$ is through the spatial discretisation and the splitting
method. For the splitting part, corresponding convergence results are
already available in the literature. In particular, for
high-regularity solutions the convergence of the aforementioned
splitting method was proved by Gauckler \cite{Gauckler2011}. In later
work,  discrete Strichartz estimates were used
in \cite{Ca-p} to prove
convergence of the Lie splitting method in the low-regularity
r\'egime. For completeness we recall the main theorem from
\cite{Gauckler2011}, adapted to our current setting. Note that
\cite{Gauckler2011} is focused on the Strang splitting, but the
corresponding result for the Lie splitting employed in the present
work follows straightforwardly from a similar analysis. In the
following we denote the splitting method approximation of $v_+$ by
$v_+^{M,\tau}$, where we used $M\in \N$ Hermite modes, and where $\tau$
is the timestep. 

\begin{theorem}[From Theorem 3.4 in \cite{Gauckler2011}]\label{thm:Lie_splitting_convergence}
	Suppose for a given $v(-\pi/2,x)$ the corresponding exact solution of \eqref{eq:nlsharmo-gen} is given by $v(\pi/2,x)$ and that $s>\left\lceil\frac{d+1}{2}\right\rceil+2+\frac{2 d}{3}$ is an integer. Let $R_{s+2}=\sup _{-\pi/2 \le t \le \pi/2}\|v(t,\cdot)\|_{\Sigma^{s+2}}$. Then there exist $h_0,M_0>0$ such that the following error bound holds for step sizes $0< h \le h_0$ and number of Hermite modes $M \ge M_0$ :
	$$
		\|v(\pi/2,\,\cdot\,)-v_+^{M,\tau}\|_{L^2}\le C\left(M^{1+\frac{d}{3}-\frac{1}{2} s}+\tau\right),
	$$
	where $C$ only depends on $d, s,$ and $R_{s+2}$.
\end{theorem}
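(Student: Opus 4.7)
The plan is to adapt the convergence analysis of Gauckler from the Strang splitting used in \cite{Gauckler2011} to the Lie splitting employed here, and to incorporate the time-dependent cosine factor $(\cos t)^{d\sigma-2}$ in the nonlinearity of \eqref{eq:nlsharmo-gen}, which is not present in Gauckler's autonomous setting. The argument decomposes the total error into a splitting error and a spatial discretisation error:
\begin{equation*}
\|v(\pi/2,\cdot) - v_+^{M,\tau}\|_{L^2} \le \|v(\pi/2,\cdot) - \tilde v_+^{\tau}\|_{L^2} + \|\tilde v_+^{\tau} - v_+^{M,\tau}\|_{L^2},
\end{equation*}
where $\tilde v_+^{\tau}$ denotes the Lie-splitting approximation in which the two subflows of \eqref{eq:splitting} are solved exactly, without Hermite truncation.

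For the first term, I would estimate the local error of one Lie step by expanding the exact flow and the composition of subflows up to order $\tau^2$: the leading discrepancy is proportional to a commutator involving $H$ and the map $v\mapsto (\cos t)^{d\sigma-2}|v|^{2\sigma}v$. The hypothesis $s > \lceil (d+1)/2 \rceil + 2 + 2d/3$ provides the Sobolev embeddings (via $H^s \hookrightarrow L^\infty$ and product-in-Sobolev estimates) needed to bound this commutator in $L^2$ by a constant multiple of $\|v(t)\|_{\Sigma^{s+2}}\le R_{s+2}$. Combining these local errors with $L^2$-stability of the two subflows ($e^{-i\tau H}$ is unitary on every $\Sigma^k$, and the pointwise nonlinear ODE flow preserves $|v|$ and propagates higher regularity by Moser-type estimates) then yields the global bound $\|v(\pi/2,\cdot) - \tilde v_+^{\tau}\|_{L^2} = \O(\tau)$.

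For the second term, the key input is the tail estimate that follows directly from \eqref{eqn:details_sigma_norm_in_hermite_basis}: for $f\in \Sigma^{s+2}$, the Hermite truncation $P_M$ onto modes $0\le m\le M-1$ satisfies
\begin{equation*}
\|f - P_M f\|_{L^2}^2 \lesssim M^{-(s+2)}\,\|f\|_{\Sigma^{s+2}}^2.
\end{equation*}
Propagating this initial truncation error through the nonlinear subflow requires an $L^\infty$-type control of truncated Hermite expansions, which in Gauckler's analysis incurs a dimension-dependent loss of $M^{1+d/3}$ compared to the raw tail bound. Combining this with the temporal error then produces the stated bound $\O(M^{1+d/3-s/2} + \tau)$.

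The main obstacle is the time-dependent cosine factor $(\cos t)^{d\sigma-2}$, which is unbounded near $t = \pm \pi/2$ whenever $\sigma < 2/d$. Gauckler's original analysis treats an autonomous Gross-Pitaevskii nonlinearity; here the non-autonomous weight forces the local-error contributions to be weighted by $(\cos t)^{d\sigma-2}$, whose integral over $(-\pi/2,\pi/2)$ is finite precisely when $\sigma > 1/d$ (as assumed throughout the paper). The constant $C$ in the final estimate accordingly depends on $\sigma$ through this integral, but remains finite in the entire regime of interest, so that a standard discrete Gr\"onwall argument closes the estimate without further loss.
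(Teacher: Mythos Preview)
The paper does not actually supply a proof of this theorem: it is stated as a result ``from Theorem~3.4 in \cite{Gauckler2011}'', with the sole remark that Gauckler's analysis for Strang splitting carries over ``straightforwardly from a similar analysis'' to the Lie splitting used here. So there is no proof in the paper to compare against; your sketch is effectively filling in what the authors chose to outsource to the reference.

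That said, your outline matches the standard strategy (local commutator error for the splitting, Hermite tail estimate for the spatial truncation, stability plus discrete Gr\"onwall to accumulate), and it is consistent with what the authors evidently have in mind. Two small comments. First, your accounting of the powers of $M$ is slightly off: the tail estimate from $\Sigma^{s+2}$ gives $M^{-(s+2)/2}$, and the loss you quote as $M^{1+d/3}$ would then yield $M^{d/3-s/2}$, not $M^{1+d/3-s/2}$; in Gauckler's analysis the interplay between the regularity used for the truncation bound and that needed for the nonlinear stability is what produces the stated exponent, so be careful when writing out the details. Second, your discussion of the cosine factor is more than the theorem itself requires: the hypothesis $R_{s+2}<\infty$ already encodes uniform $\Sigma^{s+2}$-regularity of the exact solution on the closed interval, and this, together with the integrability of $(\cos t)^{d\sigma-2}$ for $\sigma>1/d$, is enough to close the Gr\"onwall argument without further subtlety.
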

\begin{remark}
	In view of \cite{Ca-p}, the restriction on $s$ in the
        above results can be significantly weakened. By using a
        spectral cut-off, which amounts to imposing a CFL condition in
        the above method (relating $M$ with $\tau$ -- typically $M=1/\tau$
        when $d=1$), it is shown in \cite{Ca-p} that
        for $v(-\pi/2,\cdot)\in \Sigma^2$, 
        the error is again $\O(\tau)$ at time $\pi/2$. 
\end{remark}
Let us now denote the numerical solution of the scattering operator
following the above process by $u_{+}^{M,\tau}$. Then we have the
following immediate corollary: 
\begin{corollary} Suppose $s$ is as in
  Theorem~\ref{thm:Lie_splitting_convergence}, then there are
  $\tilde{M}_0,\tilde{h}_0,\tilde{C}>0$ depending only on $d,s,$ and
  $R_{s+2}$ such that  for step sizes $0< h \le h_0$ and number of
  Hermite modes $M \ge M_0$,
	\begin{align*}
	\|u_+(\,\cdot\,)-u_+^{M,\tau}\|_{L^2}\le C\left(M^{1+\frac{d}{3}-\frac{1}{2} s}+\tau\right).
\end{align*}
\end{corollary}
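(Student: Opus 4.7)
The plan is to decompose the error in the numerical scattering operator along the three stages (I)--(III) of Section~\ref{sec:methodology}, and to show that only stage (II) contributes, so that the bound in Theorem~\ref{thm:Lie_splitting_convergence} transfers verbatim to $u_+$. Concretely, let $v^{\mathrm{exact}}(-\pi/2,\cdot)$ denote the exact initial data produced by stage (I) from $u_-$, let $v(\pi/2,\cdot)$ denote the exact final state at $t=\pi/2$ of \eqref{eq:nlsharmo-gen} starting from this datum, and let $v_+^{M,\tau}$ denote the Lie-splitting approximation of stage (II). Then stage (III) applied exactly to $v_+^{M,\tau}$ gives $u_+^{M,\tau}$, while applied to $v(\pi/2,\cdot)$ it gives the true $u_+$.

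First I would argue that stages (I) and (III) are performed exactly on Hermite expansions. By \eqref{eq:initial_final_identities} we have $u_+(x)=e^{id\pi/4}\mathcal{F}^{-1}(v(\pi/2,\cdot))(x)$ and similarly $u_+^{M,\tau}=e^{id\pi/4}\mathcal{F}^{-1}(v_+^{M,\tau})$. Since the Hermite functions $\mathcal{H}_m$ are eigenfunctions of $\mathcal{F}$ with explicitly known eigenvalues (the identities recalled in Appendix~\ref{app:hermit_identities}), the map $\mathcal{F}^{-1}$ acts diagonally on truncated Hermite expansions and is therefore realised with no discretisation error beyond floating-point arithmetic. Thus the only numerical error in the computed scattering map is inherited from the splitting approximation in stage (II).

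Second I would invoke the $L^2$ isometry of $\mathcal{F}$ (Plancherel) to convert this inheritance into an equality of norms:
\begin{equation*}
  \bigl\|u_+-u_+^{M,\tau}\bigr\|_{L^2}
  = \bigl\|e^{id\pi/4}\mathcal{F}^{-1}\bigl(v(\pi/2,\cdot)-v_+^{M,\tau}\bigr)\bigr\|_{L^2}
  = \bigl\|v(\pi/2,\cdot)-v_+^{M,\tau}\bigr\|_{L^2}.
\end{equation*}
Applying Theorem~\ref{thm:Lie_splitting_convergence} to the right-hand side with $R_{s+2}=\sup_{-\pi/2\le t\le\pi/2}\|v(t,\cdot)\|_{\Sigma^{s+2}}$ immediately yields the stated bound, with $\tilde C$ the constant from the theorem and $\tilde M_0,\tilde h_0$ the corresponding thresholds.

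There is no genuine obstacle here: the statement is a direct consequence of the unitarity of the Fourier transform and the exactness of its action on Hermite bases. The only point requiring care is that the Hermite truncation used to represent $v_+^{M,\tau}$ is compatible with the exact evaluation in stage (III)---that is, one must verify that applying the Fourier-transform identities of Appendix~\ref{app:hermit_identities} to the truncated expansion produces exactly the Hermite representation of $\mathcal{F}^{-1}(v_+^{M,\tau})$, so that no additional truncation error is introduced at this final step. Once this bookkeeping is in place, the corollary is immediate.
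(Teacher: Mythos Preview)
Your proposal is correct and matches the paper's own reasoning: the paper remarks that the Fourier transforms in steps~(I) and~(III) are carried out exactly via the Hermite identities in Appendix~\ref{app:hermit_identities}, so the only error comes from the splitting in step~(II), and the corollary is stated as immediate. Your explicit use of Plancherel to pass from $\|u_+-u_+^{M,\tau}\|_{L^2}$ to $\|v(\pi/2,\cdot)-v_+^{M,\tau}\|_{L^2}$ is exactly the missing one-line justification the paper leaves implicit.
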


\subsection{Visualisation of the scattering operator}\label{sec:visualisation_of_scattering_operator}
Having described our methodology above, we can now provide a first
visualisation of the scattering operator for a given initial
condition. In Figure~\ref{fig:scattering_operator_visualisation} we
can see an example of the action of the scattering operator on the
function $u_-(x)=e^{ix-(x-1)^2/2}+e^{-(x+2)^2/4}$. 
\begin{figure}[h!]
	\centering
	\begin{subfigure}{0.495\textwidth}
		\centering
		\includegraphics[width=0.97\textwidth]{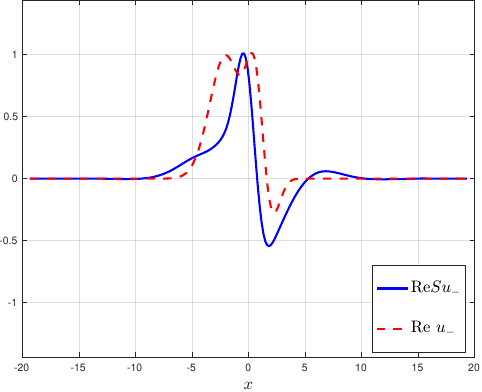}
		\caption{Real part}
	\end{subfigure}
	\begin{subfigure}{0.495\textwidth}
		\centering
		\includegraphics[width=0.97\textwidth]{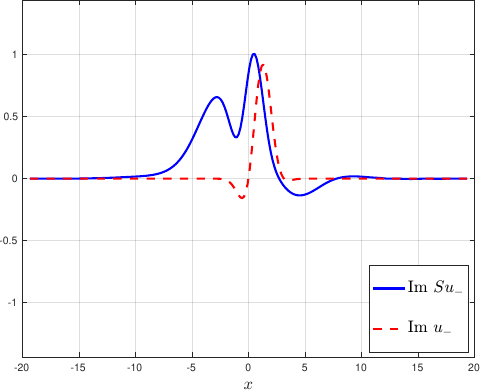}
		\caption{Imaginary part.}
	\end{subfigure}
	\caption{The action of the scattering operator on different initial conditions.}
	\label{fig:scattering_operator_visualisation}
\end{figure}
\subsubsection*{Error in conservation laws} We note that the
conservation laws \eqref{eq:mass}-\eqref{eq:center} provide a useful
way of validating our numerical method. Indeed, we can perform the
following test to see if our numerical scheme (as described in
Section~\ref{sec:methodology}) preserves these quantities: We begin by
initialising $u_-$ in the following form (for us $M=200,d=1$) 
\begin{align*}
	u_-(x)=\sum_{m=0}^{M-1}U_m \mathcal{H}_m(x),
\end{align*}
where $U_m\sim U([0,1]+i[0,1])$ are drawn uniformly at random in the
complex unit square. For each such sample $u_-$ we compute
$u_+:=S(u_-)$ and compare the value of each conserved quantity in
$u_+$ with that of $u_-$, where we use the following notation: 
\begin{align*}
	\mathcal{I}_{1}(u) &=	\|u\|_{L^2},\,\,\,
	\mathcal{I}_2(u) =	\|\nabla u\|^2_{L^2},\,\,\,
	\mathcal{I}_3(u) = \IM  \int_{\R^d} \overline{u}(x)\nabla u(x)dx,\\
	\mathcal{I}_4(u) &=	\int_{\R^d} x|u(x)|^2dx.
\end{align*}
The results are shown in Figure~\ref{fig:conservation_laws} for two
different values of the timestep $\tau_{\mathrm{lens}}$ taken in step \ref{list:ouralgo2} of
our algorithm (cf. Section~\ref{sec:methodology}). The figure shows
that these quantities are indeed preserved to reasonable accuracy,
thus confirming that our methodology for approximating $S$ is
consistent with theoretical properties of the scattering operator. 
\begin{figure}[h!]
	\centering
	\begin{subfigure}{0.495\textwidth}
		\centering
		\includegraphics[width=0.97\textwidth]{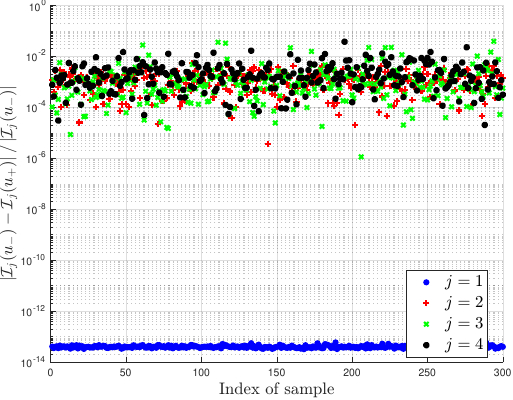}
		\caption{$\tau_{\mathrm{lens}}=0.1, N=100$.}
	\end{subfigure}
	\begin{subfigure}{0.495\textwidth}
		\centering
		\includegraphics[width=0.97\textwidth]{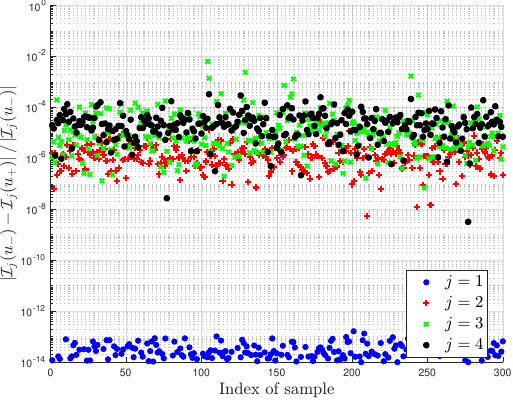}
		\caption{$\tau_{\mathrm{lens}}=0.01, N=200$.}
	\end{subfigure}
	\caption{The error in conservation laws \eqref{eq:mass}-\eqref{eq:center} for a range of randomly chosen initial conditions.}
	\label{fig:conservation_laws}
\end{figure}
\subsection{Rotating-points in the $L^2$-critical case}

As discussed in Section~\ref{sec:rotat},  the scattering operator
$S$ has a selection of fixed/rotating points which are given
as solutions to \eqref{eq:ell-crit}. We can solve this equation
numerically by expanding the unknown rotating point $\psi$ in the
Hermite basis $\{\mathcal{H}_{m}\}_{m\in\N^d, 0\le m\le M-1}$ in the
form 
\begin{align*}
	\psi\approx \sum_{\substack{m\in\N^d\\0\le m\le M-1}}\beta_m \mathcal{H}_m,
\end{align*}
which transforms \eqref{eq:ell-crit} into a finite dimensional, nonlinear system for the coefficients $\beta$. This system can be solved directly using Matlab's \verb|fsolve| command which uses a trust-region algorithm to solve the nonlinear system. With appropriate initialisation (taking $\psi_0$ non-negative), we observed that this solver tends to converge within a few dozen iterations. In practice we found that for higher energy states $j>2$ it is helpful to initialise this nonlinear solver with corresponding lower energy states, i.e. $\psi_0^{(j)}=\psi^{(j-1)}$. The shape of two rotating points and the corresponding action of the scattering operator can be observed in Figure~\ref{fig:rotating_points}.

\begin{figure}[h!]
	\centering
	\begin{subfigure}{0.495\textwidth}
		\centering
		\includegraphics[width=0.97\textwidth]{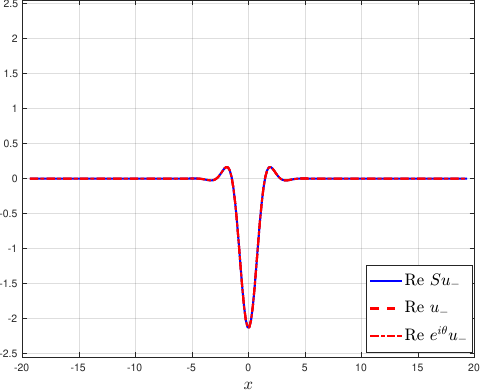}
		\caption{Real part, $j=1,\theta=0$.}
	\end{subfigure}
	\begin{subfigure}{0.495\textwidth}
		\centering
		\includegraphics[width=0.97\textwidth]{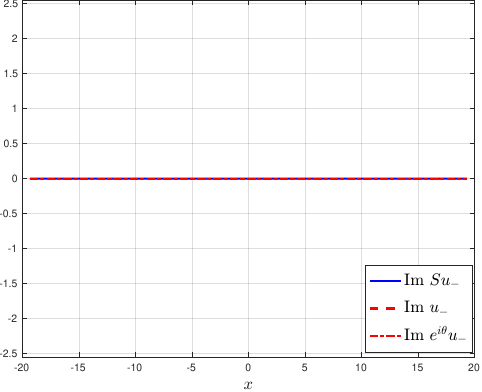}
		\caption{Imaginary part, $j=1,\theta=0.0$.}
	\end{subfigure}
	\begin{subfigure}{0.495\textwidth}
		\centering
		\includegraphics[width=0.97\textwidth]{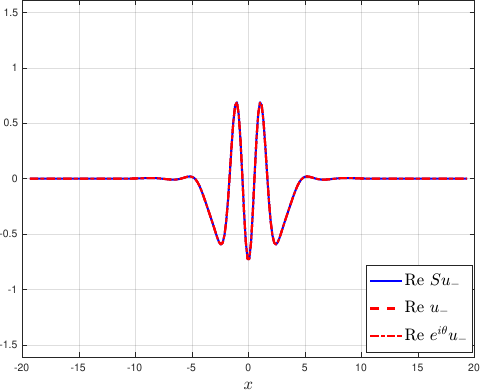}
		\caption{Real part, $j=3,\theta=2.0$.}
	\end{subfigure}
	\begin{subfigure}{0.495\textwidth}
		\centering
		\includegraphics[width=0.97\textwidth]{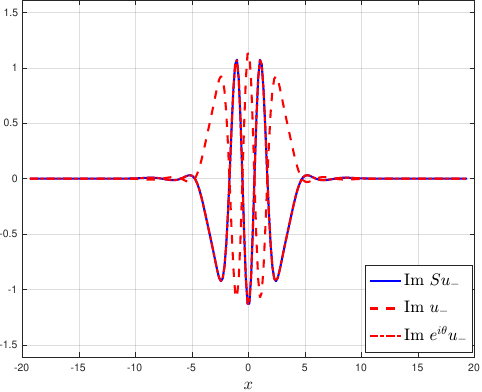}
		\caption{Imaginary part, $j=3,\theta=2.0$.}
	\end{subfigure}
	\caption{The action of the scattering operator on rotating point initial conditions.}
	\label{fig:rotating_points}
\end{figure}

Note since rotating points provide a known reference value for $Su$,
we can use this information to verify the convergence properties of
our methodology. In Figure~\ref{fig:convergence_plots_fixed_points} we
observe the error committed in our approximation of $S$ for both of
the aforementioned fixed points as a function of the timestep in the
lens-transformed system \eqref{eq:nlsharmo-gen} (cf. step (2) in
Section~\ref{sec:methodology}). 

\begin{figure}
	\begin{subfigure}{0.495\textwidth}
		\centering
		\includegraphics[width=0.97\textwidth]{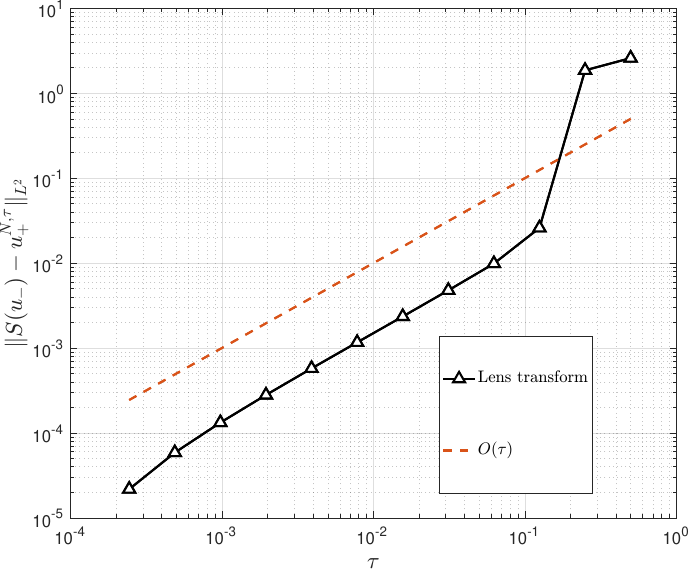}
		\caption{$j=1,\theta=0.0$.}
	\end{subfigure}
	\begin{subfigure}{0.495\textwidth}
	\centering
	\includegraphics[width=0.97\textwidth]{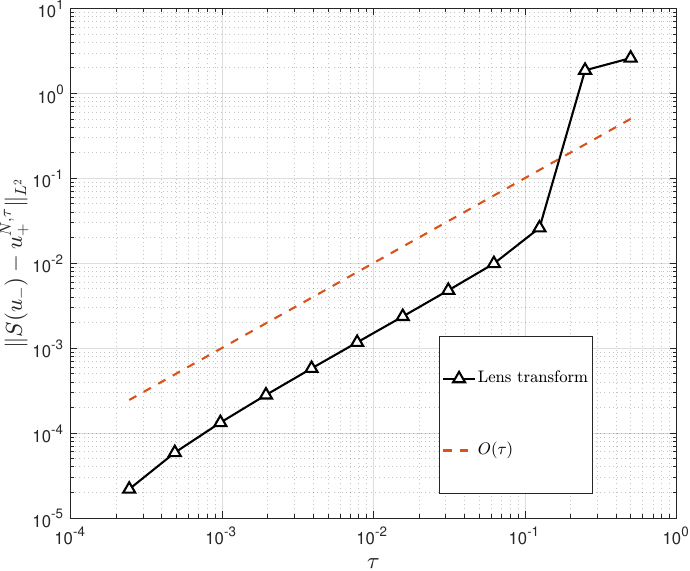}
	\caption{$j=3,\theta=2.0$.}
	\end{subfigure}
	\caption{Convergence behaviour of our method for known reference values in the rotating point case.}
	\label{fig:convergence_plots_fixed_points}
\end{figure}

\subsection{Finding rotating points in supercritical cases}
Having gained confidence in the accuracy and performance of our
numerical approximation of $S$, we can use this technique to
numerically test whether we can find rotating points in the
supercritical case $\sigma>2/d$. To test this we performed the
following steps for a given $\sigma>2/d$. 
\begin{enumerate}
	\item Based on the continuity arguments outlined in
          Appendix~\ref{app:continuity_argument_sigma} we {solve
          \eqref{eq:ell-gen} for} the specific value of $\sigma>2/d$,
          and use this to initialise $u_-$; 
	\item For this value of $u_-$ we try to find the optimal $\theta$ such that \begin{align*}
		\theta_0=\mathrm{argmin}_{\theta\in[0,2\pi)}\left| u_+^{N,\tau_{\mathrm{lens}}}-e^{i\theta}u_-\right|.
	\end{align*}
	\item We optimise both $u_-$ and $\theta$ simultaneously to match the rotating point condition from the initialisation $u_-$ using Matlab's \verb|fsolve|: Let us denote the Hermite coefficients of $u_-$ by $\alpha_m$ such that $u_-=\sum_{m=0}^{M-1}\alpha_m\mathcal{H}_{m}(x)$, and find
	\begin{align*}
		\tilde{\alpha},\tilde{\theta}=\mathrm{argmin}_{\alpha,\theta}\left| u_+^{N,\tau_{\mathrm{lens}}}-e^{i\theta}u_-\right|.
	\end{align*}
\end{enumerate}
The outcomes of all of those steps are shown in
Figure~\ref{fig:rotating_points_supercritical_regime}, and we observe
that while in the critical case (blue solid line with $\sigma=2.0,
\theta=0.5$) our scattering operator recovers $u_-$ to within an error
of size roughly $10^{-6}$, none of our approaches lead to a rotating
point for the supercritical r\'egime (noting that all errors are on the
order of $10^{-2}$, and this error could not be improved in our
experiments, even with finer temporal or spatial resolution in the
solver). While we recognise this is not definite conclusive evidence
(see Appendix~\ref{app:continuity_argument_sigma} for more precise
comments), this experiment provides a numerical indication that if
rotating points exist in the supercritical r\'egime, they may not be
located in the proximity of rotating points from the critical r\'egime. 
\begin{figure}
	\centering
	\includegraphics[width=0.7\textwidth]{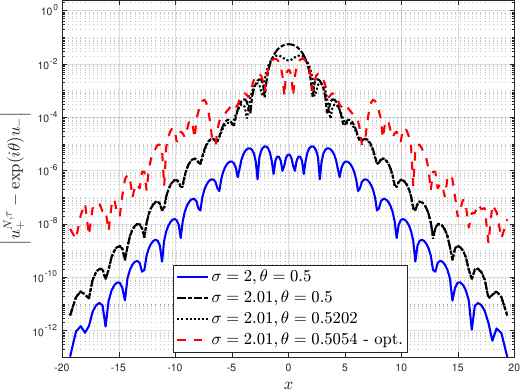}
	\caption{The error observed in pseudo-rotating points in the
          supercritical r\'egime.} 
	\label{fig:rotating_points_supercritical_regime}
\end{figure}

\subsection{Long-range case}

Finally, we note that we can use our approach also to simulate the long-range case. In this case, we can proceed similarly to the algorithm described in Section~\ref{sec:methodology} with the small exception that the Lie splitting of \ref{list:ouralgo2} involves the integration of 
\begin{align*}
	i\partial_t v=(\cos t)^{-1}|v|^{2} v,
\end{align*}
which has a singularity at $t=\pm\pi/2$.
\begin{figure}[h!]
\begin{subfigure}{0.495\textwidth}
	\centering
	\includegraphics[width=0.99\textwidth]{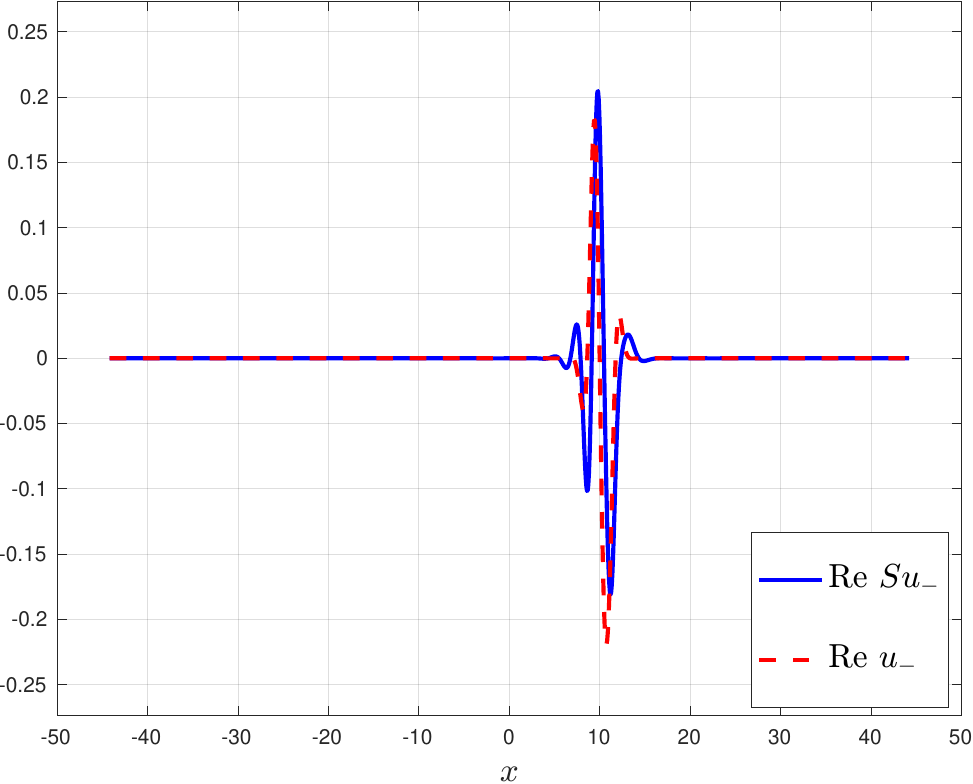}
	\caption{Real part.}
	\label{fig:long_range_real_part}
\end{subfigure}
\begin{subfigure}{0.495\textwidth}
	\centering
	\includegraphics[width=0.99\textwidth]{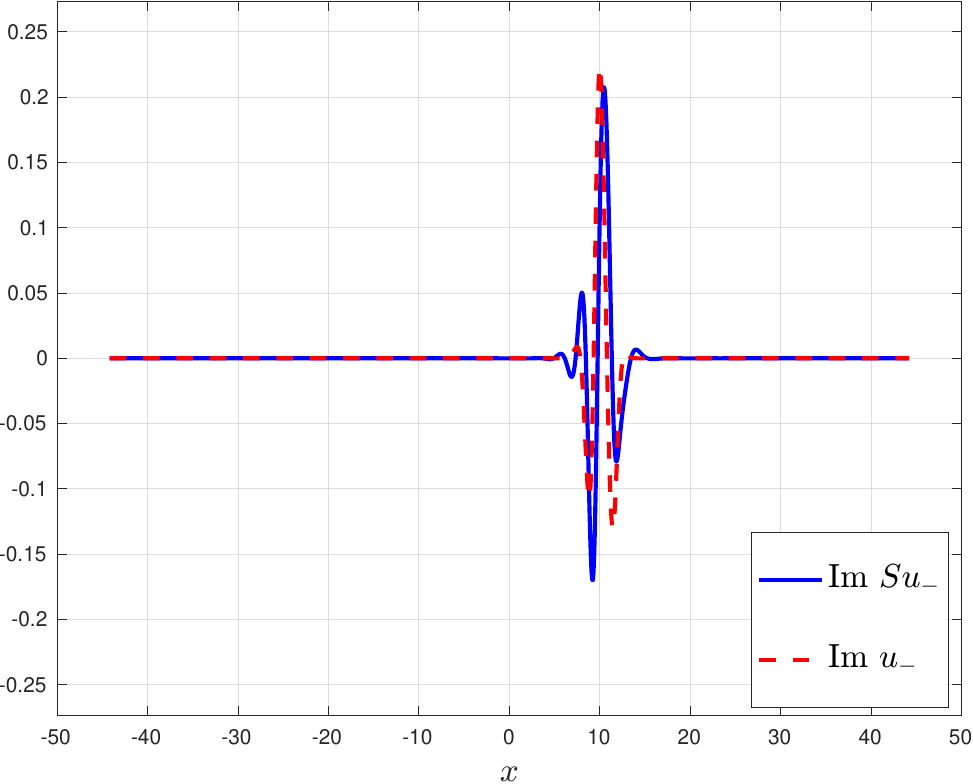}
	\caption{Imaginary part.}
	\label{fig:long_range_imag_part}
\end{subfigure}
\begin{subfigure}{0.495\textwidth}
	\centering
	\includegraphics[width=0.99\textwidth]{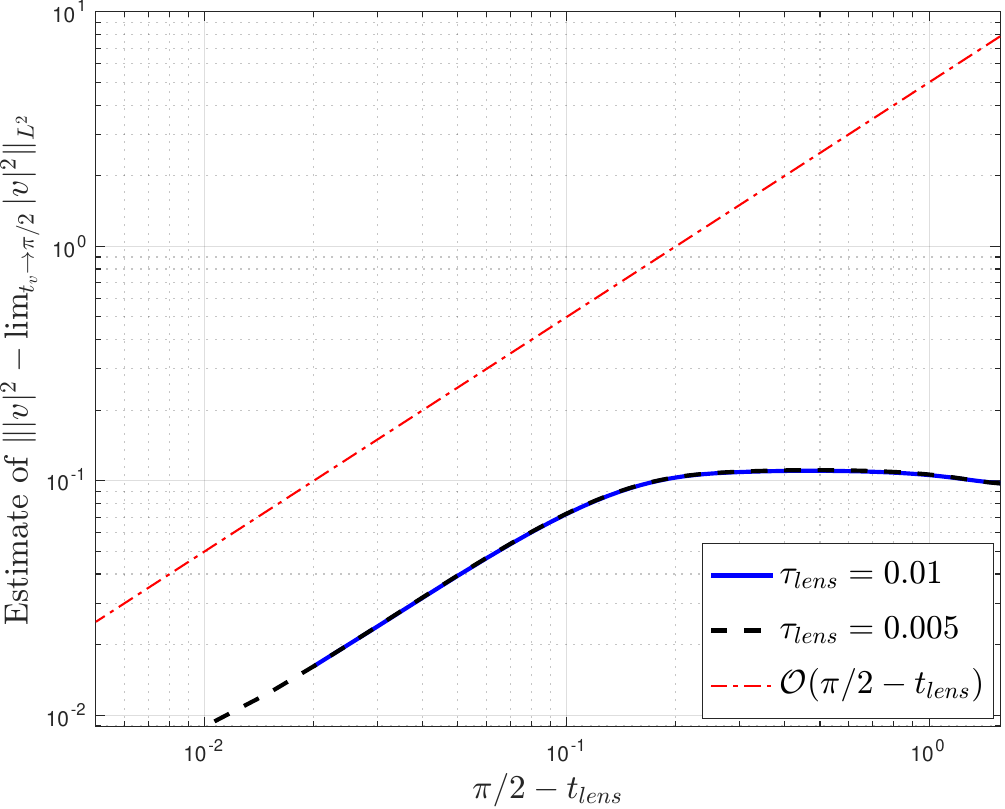}
	\caption{The convergence behaviour of our approach as $t_{\mathrm{lens}}\rightarrow \pi/2$.}
	\label{fig:convergence_long_range}
\end{subfigure}
\caption{The action of the scattering operator and convergence of the numerical method in long-range case.}
\label{fig:long_range_case}
\end{figure}

We overcome this by starting our time-evolution at
$-\pi/2+\tau_{\mathrm{lens}}$ and ending it at
$\pi/2-\tau_{\mathrm{lens}}$, i.e. we omit the first and the final
timestep, and in addition we incorporate the phase shift
\eqref{eq:mod-phase} in steps \ref{list:ouralgo1} \&
\ref{list:ouralgo3}. Figure~\ref{fig:convergence_long_range} provides
numerical evidence that our approach seems to recover the convergence
of the lens-transformed variable in modulus, and thus indicates that
this approach leads to accurate computation of the scattering operator
also in the long-range case. An example of the action of the
scattering operator in this r\'egime can be seen in
figures~\ref{fig:long_range_real_part} \&
\ref{fig:long_range_imag_part}. The numerical study of the long range
case is mainly motivated by the question addressed in
Section~\ref{sec:long-foc}.

\section{Approaching the long range threshold}
\label{sec:short}

As recalled in the introduction, a complete (short range) scattering
theory is available  in the space $\Sigma$ provided that the power
$\si$ satisfies \eqref{eq:completeness}, that is
\begin{equation*}
   \si \ge \frac{2-d+\sqrt{d^2+12d+4}}{4d}=:\si_0(d).
 \end{equation*}
We can check that $\si_0(d)>1/d$, and the gap between this value
 and the threshold for long range scattering, $1/d$, is described as
 follows, in dimensions $d=1$ and $2$ (for $d\ge 3$, the condition
 \eqref{eq:wave-op} for the existence of wave operators shows that the
 picture is less complete): if ($d\le 2$ and) $\si>1/d$,
 \begin{itemize}
  \item The wave operators (mapping $u_-$
   to $u_0$) are well-defined from $\Sigma$ to $\Sigma$.
 \item In the small data case, scattering is known in $\Sigma$: if
   $u_0\in \Sigma$ is such that $\|u_0\|_\Sigma$ is 
   sufficiently small, then there exists $u_+\in \Sigma$ such that
   \begin{equation}\label{eq:AC-Sigma}
   \|U_0(-t)u(t)-u_+\|_\Sigma\Tend t {+\infty}0.
 \end{equation}
\item Some partial asymptotic completeness holds (\cite{BGTV23}): for any
   $u_0\in\Sigma$, there exists a unique asymptotic state $u_+\in H^1(\R^d)$,
   such that
   \begin{equation}\label{eq:cvH1-23}
   \|U_0(-t)u(t)-u_+\|_{H^1(\R^d)}= \|u(t)-U_0(t)u_+\|_{H^1(\R^d)}\Tend t {+\infty}0.
 \end{equation}
\end{itemize}
 A natural question in view of these two properties is: if
 $u_0\in\Sigma$ is large, does $u_+$ belong to $\Sigma$? If yes, does
 the convergence hold in $\Sigma$, in the sense that
 \eqref{eq:AC-Sigma} holds?
A first issue when addressing such questions numerically is that the
above notion of smallness  is implicit. So we have to
gradually increase the size of $u_0$ and check whether the convergence
still holds in $\Sigma$ up to the final time $\pi/2$ (in terms of
$v$).

\smallbreak

For a more quantitative statement, resume the operator present
in \eqref{eq:evol1} and \eqref{eq:evol2}, and apply them to the
identity \eqref{eq:lens}:
\begin{align*}
  & x v(t,x)\sin t -i \cos t\nabla  v(t,x) = \frac{-ie^{-i\frac{|x|^2}{2}\tan t}}{(\cos
    t)^{d/2}}\nabla u \(\tan t,\frac{x}{\cos
    t}\),\\
  & x v(t,x)\cos t +i \sin t\nabla  v(t,x) = \frac{e^{-i\frac{|x|^2}{2}\tan t}}{(\cos
    t)^{d/2}}\Big(\frac{x}{\cos t} u \(\tan t,\frac{x}{\cos
    t}\) \\
 & \phantom{x v(t,x)\cos t +i \sin t\nabla  v(t,x) =
   \frac{e^{-i\frac{|x|^2}{2}\tan t}}{(\cos 
    t)^{d/2}}}\quad + i\tan t \nabla u \(\tan t,\frac{x}{\cos
    t}\)\Big).
\end{align*}
Taking the $L^2$-norms yields
\begin{align*}
&  \|x v(t)\sin t -i \cos t\nabla  v(t)\|_{L^2} = \|\nabla u\(\tan
  t\)\|_{L^2},\\
 & \| x v(t)\cos t +i \sin t\nabla  v(t)\|_{L^2} = \|y u(\tan t)+
   i\tan t\nabla u(\tan t)\|_{L^2} .
\end{align*}
In the last term, we recognise the operator $J(\tan t)$, and the
identities listed in Lemma~\ref{lem:disp-lin} imply
\begin{equation}\label{eqn:expressionJu_using_v}
  \| x v(t)\cos t +i \sin t\nabla  v(t)\|_{L^2} =\|J(\tan t )u(\tan
  t)\|_{L^2} = \|xU_0(-\tan t )u(\tan t)\|_{L^2}. 
\end{equation}
In view of \eqref{eq:cvH1-23},
\begin{align*}
  \|x v(t)\sin t -i \cos t\nabla  v(t)\|_{L^2}
  &= \|\nabla u\(\tan
  t\)\|_{L^2}= \|U_0(-\tan t)\nabla u\(\tan
  t\)\|_{L^2}\\
 & = \|\nabla U_0(-\tan  t) u\(\tan
  t\)\|_{L^2}\Tend t {\frac{\pi}{2}}\|\nabla u_+\|_{L^2}.
\end{align*}
Therefore, if $\|x v(t)\sin t -i \cos t\nabla  v(t)\|_{L^2}$ is
unbounded as $t\to \frac{\pi}{2}$, then \eqref{eq:AC-Sigma} cannot
hold. Finally, in view of the identity
\begin{equation*}
  \begin{pmatrix}
    x v(t)\sin t -i \cos t\nabla  v(t)\\
    x v(t)\cos t +i \sin t\nabla  v(t)
  \end{pmatrix}
  =
  \begin{pmatrix}
    \sin t & -\cos t\\
    \cos t & \sin t
  \end{pmatrix}
  \begin{pmatrix}
    x v(t)\\
    i\nabla v(t)
  \end{pmatrix},
\end{equation*}
and since the determinant of the matrix on the right hand side is one,
the unboundedness of $x v(t)\sin t -i \cos t\nabla  v(t)$ in $L^2$ is
equivalent to the unboundedness of $ v(t)$ in $\Sigma$.
\smallbreak

\begin{figure}
	[h!]
	\begin{subfigure}{0.95\textwidth}
		\centering
		\includegraphics[width=0.97\textwidth]{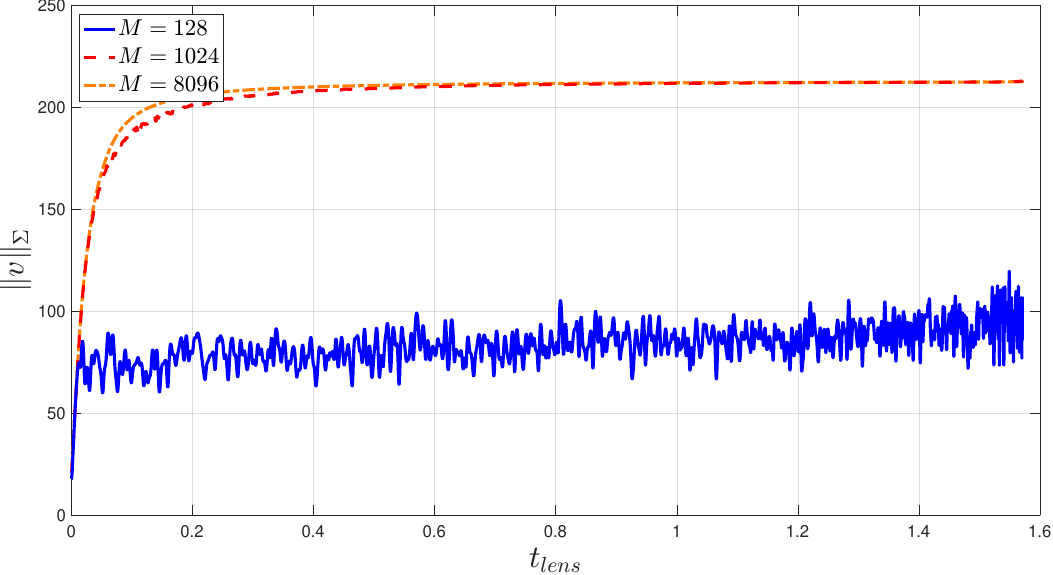}
		\caption{$\sigma=1.5$.}
	\end{subfigure}\\
	\begin{subfigure}{0.95\textwidth}
		\centering
		\includegraphics[width=0.97\textwidth]{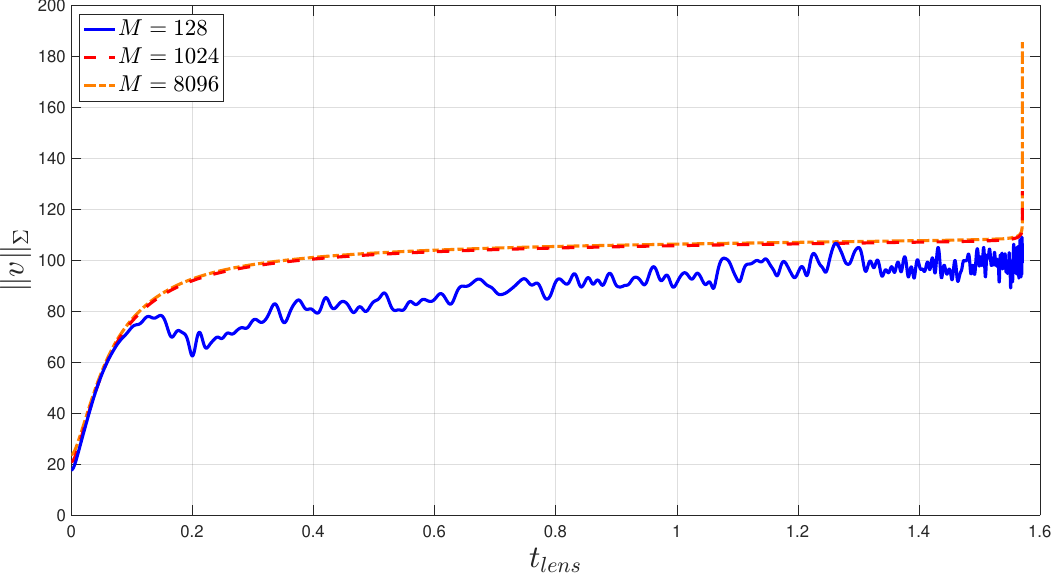}
	\caption{$\sigma=1.15$.}\label{fig:defocusing_sigma_1-15}
	\end{subfigure}
	\caption{The time-evolution of the $\Sigma$-norm of the lens-transformed variable $v$ with $\tau_{\mathrm{lens}}=2^{-14}, \|v_0\|_{\infty}=10$.}
	\label{fig:defocusing_sigma}
\end{figure}

\smallbreak

We consider the one-dimensional case $d=1$, where
\begin{equation*}
	\si_0(1) = \frac{1+\sqrt{17}}{4}\approx 1.28.
\end{equation*}
Similarly to Section~\ref{sec:visualisation_of_scattering_operator}, the following numerical experiment is performed with randomised initial data of the form
\begin{align}\label{eqn:initial_conditions_long_range}
	{v^M(t_{\mathrm{lens}}=0)}=\sum_{m=0}^{M-1}\lambda_m^{-1}U_m \mathcal{H}_m(x),
\end{align}
where $U_m\sim U([0,1]+i[0,1])$ and $\lambda_m$ are as in \eqref{eqn:details_sigma_norm_in_hermite_basis}. We then evolve this initial data according to \eqref{eq:nlsharmo-gen} using \ref{list:ouralgo2}.

As a reference case, we consider $\si=1.5$: we know that scattering
holds in $\Sigma$ for arbitrary large data. For this value, the cosine
factor is nontrivial in \eqref{eq:nlsharmo-gen}, so checking that
indeed $v(t,\cdot)$ always converges in $\Sigma$ as $t\to \pi/2$ is a
way to verify that the numerical scheme is robust, as illustrated by
the top picture in Figure~\ref{fig:defocusing_sigma}.

On the other hand, the value $\si=1.15$ belongs to the ``unknown
range'' $(1,\si_0(1))$. The numerical simulation for this value of
$\si$ is reported in the second picture in
Figure~\ref{fig:defocusing_sigma}. We observe that the $\Sigma$-norm
of $v$ becomes unbounded as $t\to \pi/2$ (this is to be compared with the top
picture, where the norm remains bounded). This suggests that for this
value of $\si$, asymptotic completeness in $\Sigma$ (like stated in
\eqref{eq:AC-Sigma}) does not hold for large data. In view of the
result from \cite{BGTV23}, this means that the failure in
\eqref{eq:AC-Sigma} comes from the absence of convergence of the
moment.

If indeed the scattering map may fail to map $\Sigma$ to $\Sigma$ for
$\si$ smaller than the Strauss exponent $\si_0(d)$, several questions
appear.  First, the mechanism for this failure is unclear, and should
be characterized more precisely on an analytical level. Second, if
another threshold is involved to characterize scattering in $\Sigma$,
what is its value? In principle, our numerical approach could help
obtain some bounds on this value, up to running many
experiments. Guessing a precise value or formula from numerics seems
unrealistic though, among other reasons because  only finitely many
scattering states can be considered.

{\begin{remark}\label{rmk:expected_value_initial_data}
		We note that 
		$$
		\begin{aligned}
			\mathbb{E}\left(\left\|\sum_{m=0}^{M-1}\lambda_m^{-1}U_m \mathcal{H}_m(x)\right\|_{\Sigma}^{2}\right) & =\mathbb{E}\left(\sum_{m=0}^{M-1} \lambda_{m}^{-1}\left|U_{m}^{\omega}\right|^{2}\right)  \propto \log M
		\end{aligned}
		$$
		Thus, the family of initial conditions \eqref{eqn:initial_conditions_long_range} should be viewed as a sequence of \emph{spectrally truncated} initial data whose $\Sigma$-norm increases
		slowly (like $\log M$) with $M$. For each fixed $M$, the data belongs to
		$\Sigma$, however, in the limit $M\to\infty$ the sequence approaches the
		threshold of $\Sigma$-regularity. The moderate growth of the $\Sigma$-norm with $M$
		is visible at the initial time $t_{\mathrm{lens}}=0$ (see
		Figure~\ref{fig:defocusing_sigma}), but the observed blow-up for $\sigma=1.15$ occurs only as
		$t_{\mathrm{lens}}\to\pi/2$. This indicates that the divergence is not a
		numerical artifact caused by increasing $M$.
\end{remark}}

To provide further support to confirm that the rapid increase in $\Sigma$-norm as $t_{\mathrm{lens}}\rightarrow\pi/2$ in Figure~\ref{fig:defocusing_sigma_1-15} is not a numerical artefact we note that the increase is consistent with the growth:
\begin{align}\label{eqn:JuL2_growth}
	\|J(t)u(t)\|_{L^2}\lesssim (1+t)^{1-\frac{d\sigma}{2}},\quad
  t\ge 0,
\end{align}
which follows from the pseudoconformal evolution law and Gr\"onwall
lemma (see e.g. \cite[Proposition~11.1]{GinibreDEA} or
\cite[Section~7.5]{CazCourant}). Indeed, using 
\eqref{eqn:expressionJu_using_v} we can compute $\|J(t)u(t)\|_{L^2}$
for the above experiment when $\sigma=1.15$ and observe that the
observed growth of this quantity is, in fact, consistent with
\eqref{eqn:JuL2_growth}, cf. Figure~\ref{fig:JuL2}.

\begin{figure}[h!]
	\centering
	\includegraphics[width=0.97\textwidth]{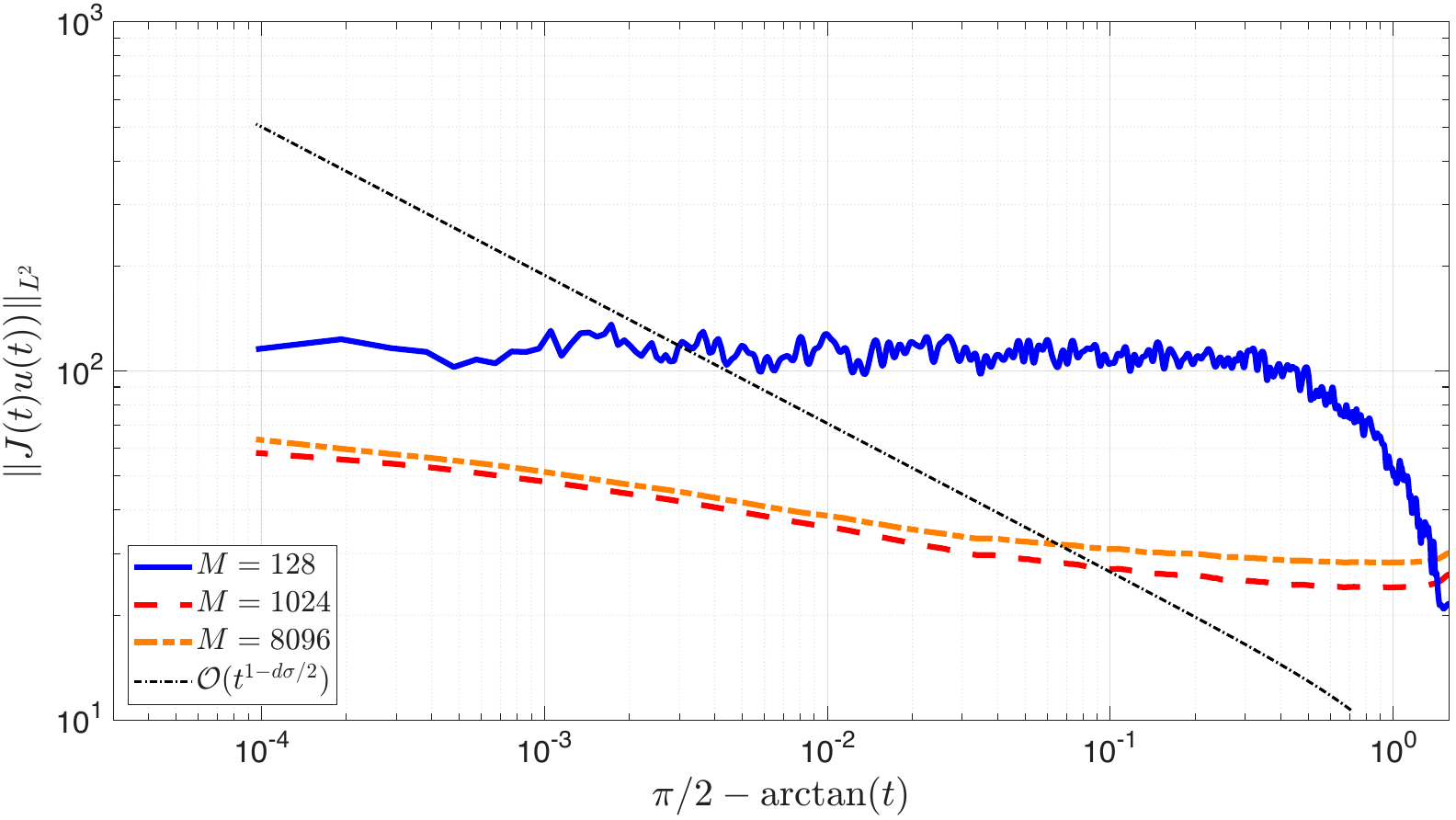}
	\caption{{The time-evolution of $\|J(t)u(t)\|_{L^2}$ for
          $d=1$, $\si=1.15$, with
          $\tau_{\mathrm{lens}}=2^{-14},\|v_0\|_{\infty}=10$, in
          log-log scale.}}
	\label{fig:JuL2}
\end{figure}

\section{Focusing long range case}
\label{sec:long-foc}

The results recall in Section~\ref{sec:long-range} turn out to be essentially
independent of the sign in front of the nonlinearity: the nonlinearity
may be defocusing or focusing, the main difference being that the
signs on the right hand side of \eqref{eq:mod-phase} are swapped in
the focusing case. 
A feature of the focusing one-dimensional cubic case,
\begin{equation}
  \label{eq:foc1D}
  i\d_t u +\frac{1}{2}\d_x^2 u = -|u|^2u,
\end{equation}
is that there exist solitary waves of the form $e^{-i\nu t}\phi_\nu(x)$. The
function $\phi_\nu$ must solve
\begin{equation*}
  \nu\phi_\nu +\frac{1}{2}\phi_\nu'' = -|\phi_\nu|^2\phi_\nu.
\end{equation*}
The ground state is the only real, positive, and even solution. It is given
by the explicit formula 
\begin{equation*}
  \phi_\nu(x) = \sqrt{2\nu}\operatorname{sech}\(x\sqrt{2\nu}\). 
\end{equation*}
Two ``natural'' Lebesgue norms are independent of $\nu>0$: $\|\phi_\nu\|_{L^1}$ and
$\|\hat \phi_\nu\|_{L^\infty}$. In \cite{CaCMP}, it is shown (after
making the normalisation for the Fourier transform consistent with the
present convention) that if 
\begin{equation*}
  \|\hat u_-\|_{L^\infty}<\frac{1}{\sqrt 2} ,
\end{equation*}
then the modified wave operator is well-defined (the smallness
assumption for the existence of modified scattering is not
explicit). This assumption was relaxed in \cite{KM-p}, $\|\hat
u_-\|_{L^\infty}<1$. 
As 
\begin{equation*}
  \int_{-\infty}^x \operatorname{sech} (y)dy = 2\arctan\(e^x\) , 
\end{equation*}
we have $\|\phi_\nu\|_{L^1}=\pi$. We also infer, since $\hat
\phi_\nu(0) =\frac{1}{\sqrt{2\pi}}\int \phi_\nu =
\frac{1}{\sqrt{2\pi}}\|\phi_\nu\|_{L^1}$,
\begin{equation*}
  \|\hat \phi_\nu\|_{L^\infty}= \sqrt{\frac{\pi}{2}}. 
\end{equation*}
It is not known whether either 
of these 
two values is sharp for the existence of the modified wave operator
(mapping $u_-$ to $u_0$ in Section~\ref{sec:long-range}) or
its inverse (mapping $u_0$ to $u_+$ in Section~\ref{sec:long-range}). The
numerical tests presented below suggest that the $L^\infty$-norm of
$\hat \phi_\nu$ is not a threshold for the existence of the inverse of
the modified wave
operator, in the sense that apparently, this operator is
not well-defined for initial data $\phi$ such that $\|\hat
\phi\|_{L^\infty}< \|\hat \phi_\nu\|_{L^\infty}$. 
\smallbreak

The parameter $\nu$ in the family of solitary waves must be chosen carefully
for the numerical simulations. If $\nu>0$ is too small, then
$\phi_\nu$ is small and diffuse. The smallness is delicate for precise
computation, the diffusion is delicate to avoid boundary effects in the
computational domain. On the other hand, is $\nu$ is too large, then
$\phi_\nu$ is large and peaked ($\phi_\nu\rightharpoonup
\pi\delta_{x=0}$ as $\nu\to \infty$), which is also delicate to manage
numerically. We have observed that the value $\nu=1/2$ is a good
compromise, and report the simulations in that case, in
Figure~\ref{fig:long-foc-Linfty}.

\begin{figure}
	[h!]
	\begin{subfigure}{0.95\textwidth}
		\centering
		\includegraphics[width=0.97\textwidth]{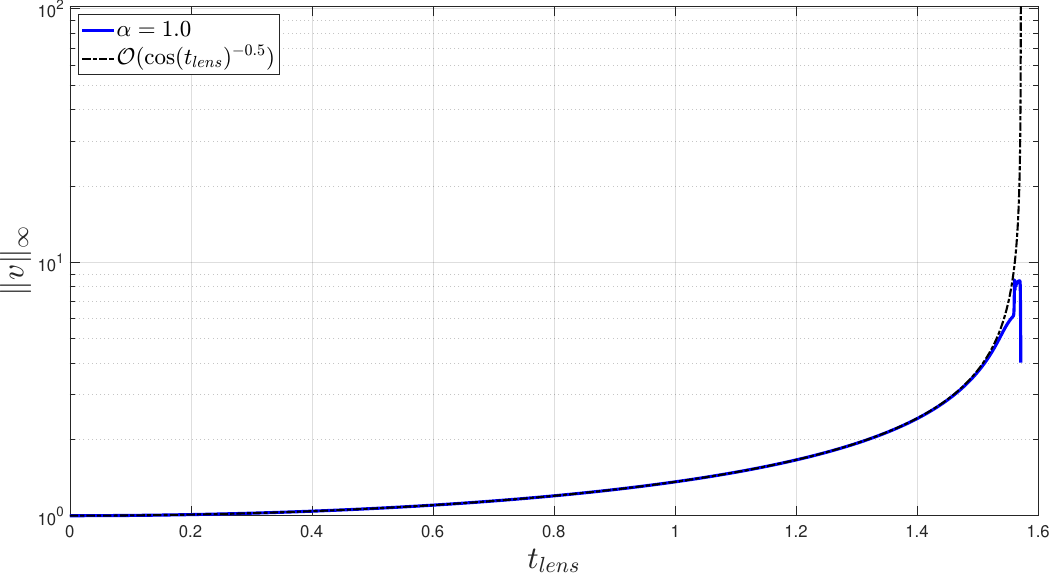}
		\caption{$\alpha=1$.}
	\end{subfigure}\\
	\begin{subfigure}{0.95\textwidth}
		\centering
		\includegraphics[width=0.97\textwidth]{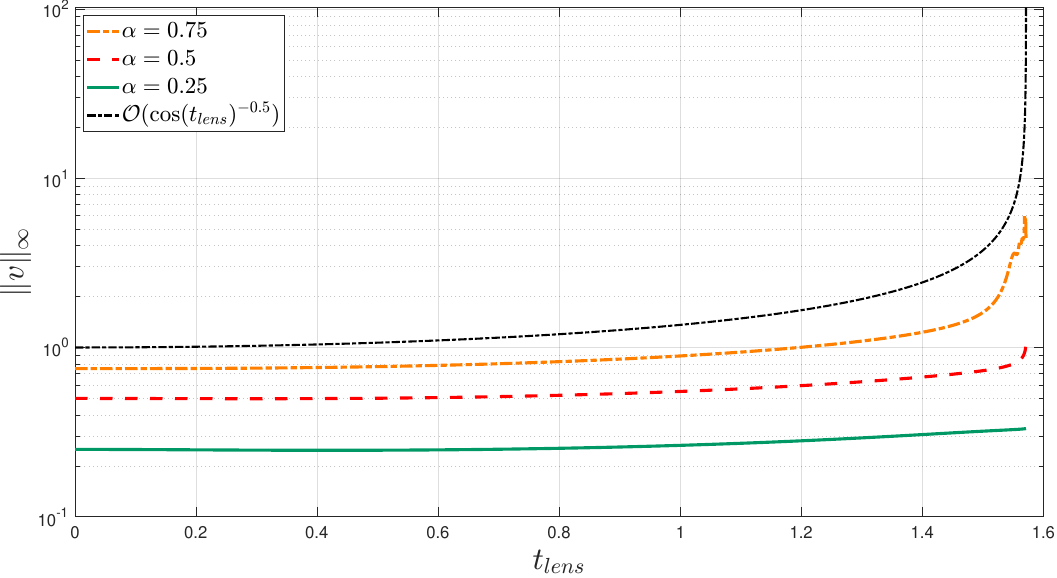}
		\caption{$\alpha\in(0,1)$.}
	\end{subfigure}
	\caption{The $L^\infty$-norm evolution for $v_0=\alpha\phi_{1/2}$ with $\tau_{\mathrm{lens}}=2^{-14}, M=8096$.}
	\label{fig:long-foc-Linfty}
\end{figure}

The experiments then consist in
considering
\begin{equation*}
	u_0(x)=v_0(x)= \alpha \phi_{1/2}(x),\quad 0<\alpha\le 1.
\end{equation*}
In order to provide a good confidence level in the observations made in these graphs, we use the highest numerical resolution employed in the previous section, i.e. $\tau_{\mathrm{lens}}=2^{-14}, M=8096$. We know that for $\alpha=1$, there is no modified scattering, since
the exact solution is $u(t,x) = e^{-it/2}\phi_{1/2}(x)$, and we ask
whether or not for small values of $\alpha$, modified scattering may
be ruled out. 

We do not measure the $\Sigma$-norm
of $v$, since when there is modified scattering, the phase
modification $\Phi_+$ causes the $\Sigma$-norm of $v$ to be unbounded
as $t\to \pi/2$,  see \eqref{eq:v-long}, and stick to the observation
of the $L^\infty$-norm. When $u$ disperses like in the linear case, the
singular cosine factor $(\cos t)^{-d/2}$ in \eqref{eq:lens}
is compensated by $(\tan t)^{-d/2}$ in
\eqref{eq:lens}.
The top picture in
Figure~\ref{fig:long-foc-Linfty} shows the case
$u_0=\phi_{1/2}$, for which we know that there is no modified
scattering, and not even dispersion. We compare the behaviour of $v$ with the
function $\(\cos t\)^{-1/2}$, which is the factor in front of $u$ in
\eqref{eq:lens}. The
absence of dispersion of the solitary wave is observed by the
coincidence of the two curves, up to some time relatively close to
$\pi/2$, after which the numerical scheme becomes unstable. The second picture
illustrates, from different values of $\alpha$, the convergence or divergence of $|v|$ as time approaches $\pi/2$, with the upper bound in time for
stability  observed in the case $\alpha=1$. The growth in the case
$\alpha=0.75$ suggests that the value $\alpha=1$ may not be the threshold
to decide whether there is modified scattering or not, and modified
scattering may be ruled out for initial data smaller than the ground
state. 


Considering $u_0= \alpha \phi_{1/2}$ with $\alpha>0$ is
reminiscent of the case considered in \cite{SatsumaYajima74}. Using
the complete integrability of the one-dimensional cubic Schr\"odinger
equation, the authors provide explicit formulas for the propagation of
multiples of solitons, and Section~5 there concerns exactly the above case,
with $\alpha$ possibly an integer, or a real number ``close'' to an
integer. The second case is the one to be compared with our
simulations: for $\alpha\in (1/2,3/2)$
(\cite[Section~5.2]{SatsumaYajima74}, case $N=1$), simulations
reported in \cite[Figure~2]{SatsumaYajima74} suggest that for
$\alpha=0.8$, the solution $u$ is not dispersive. This appears to be
consistent with our experiment in the case $\alpha=0.75$. As pointed
out in Section~\ref{sec:long-range}, more thorough comparisons of the
numerical approaches based on the lens transform and on complete
integrability should be considered.

\bigbreak

The fact that the ground state does not yield the threshold between
scattering and non-scattering in the focusing would not be new
though. Indeed, in \cite{Masaki2015,Masaki2017}, the author has proven
that for
\begin{equation*}
  \max\(\frac{1}{d},\frac{2}{d+2}\)<\si<\frac{2}{d},
\end{equation*}
initial data which are smaller that the ground state do not behave
like the linear Schr\"odinger flow in the large time limit. Another
important aspect to be considered is the topology in which the
scattering is addressed: above, we have measured the $L^\infty$ norm
of the Fourier transform of $\phi$, while in
\cite{Masaki2015,Masaki2017}, and in e.g. \cite{KMMV17,KMMV19}, it is
shown that for such $\si$, a priori bounds for the solution in
a suitable function space imply scattering.  Finding analogues for
such statements in the case $\si=d=1$ is a challenging question.

\section{Conclusions}
\label{sec:conclusions}

We have recalled several known features of the scattering operator
associated to the nonlinear Schr\"odinger equation \eqref{eq:nls}, and
established two new conservation laws. These analytical properties have
served as tests to validate the numerical approach presented here,
which relies on the lens transform. This transform avoids spatial
boundary effects since the harmonic oscillator is confining. It also
avoids the issue of computing over large time and accumulating numerical
errors, as time is compactified thus allowing for efficient, reliable simulations. By using Hermite functions (which
form an eigenbasis of the harmonic oscillator) and Lie time splitting,
our simulations are in very good agreement with known properties of
the scattering operator, even when this operator has to be modified,
due to long range effects (only the one-dimensional cubic case is
considered here).
\smallbreak

This confidence in the numerical approach led us to consider some
questions remaining open on the analytical level at the present
time. Our simulations suggest the following conjectures:
\begin{itemize}
	\item In the $L^2$-supercritical r\'egime $\si>2/d$, there may
          be no rotating point for the scattering operator.
        \item In the intermediate range $1/d<\si<\si_0(d)$, scattering
          may not hold in $\Sigma$ for large data.  
	\item In the focusing cubic one-dimensional case, the ground
          state(s) may not be the minimal  object to rule out the
          existence of modified scattering. 
\end{itemize}
Apart from securing these conjectures  with more 
numerical experiments, possibly with different approaches, some next
natural questions could be:
\begin{itemize}
\item In the intermediate range $1/d<\si<\si_0(d)$, for large data, is there
  convergence of intermediate momenta in \eqref{eq:AC-Sigma}, that is,
  is there $s\in (0,1)$ such that $x\mapsto \<x\>^su_+(x)\in
  L^2(\R^d)$ and
  \begin{equation*}
    \left\|\<x\>^s\(U_0(-t)u(t,x) - u_+(x)\)\right\|_{L^2}\Tend t
    {+\infty} 0?
  \end{equation*}
\item In the focusing cubic one-dimensional case, which criterion
  regarding the initial state $u_0$ 
  makes it possible to decide whether there is modified scattering or not?
\end{itemize}

\appendix

\section{Fast, stable Hermite transforms}\label{app:hermite_transform}
As part of the splitting method to solve \eqref{eq:nlsharmo-gen}, we
have to be able to efficiently transform between function values and
Hermite coefficients. In particular, let
$\mathbf{x}=(x_0,\dots,x_{M-1})$ be the Gauss--Hermite quadrature
nodes on $\R$ of degree $M$. For a function $f(x)$ we can thus
approximate its Hermite expansion coefficients as follows: Let
$f(x)=\sum_{m\ge 0}c_m \mathcal{H}_{m}(x)$, then 
\begin{align*}
	c_m=\int_{\R} f(x) \mathcal{H}_m(x) dx=\int_{\R} f(x) \mathrm{H}_m(x)e^{\frac{x^2}{2}} e^{-x^2} dx \approx \sum_{k=0}^{M-1}w_kf(x_k)\mathrm{H}_m(x_k)e^{\frac{x_k^2}{2}},
\end{align*}
where $w_k$ are the Gauss--Hermite quadrature weights. In other words the transformation \textit{from function values to Hermite coefficients} can be expressed as the following matrix multiplication
\begin{align*}
	\mathbf{c}=\mathsf{T}f(\mathbf{x}),
\end{align*}
where $\mathsf{T}_{km}=w_m\mathrm{H}_k(x_m)\exp(x_m^2/2).$ The entries
of $\mathsf{T}$ grow super-exponentially in the number of Hermite
modes $M$, and as a result both the assembly of the matrix and the
computation with this matrix become unstable even for moderate values
of $M$. This was realised in \cite{bunck09}, and further developed in
\cite{maierhoferwebb25} (see also the Julia implementations in
\cite{QuantumTimeSteppers,FastGaussQuadrature,FastTransforms}), who
proposed a stable way of computing the entries of $\mathsf{T}$ (see
\cite[Appendix]{bunck09}), and a stable way of computing with
$\mathsf{T}$ (see \cite{maierhoferwebb25}). 
The basis for this stable construction is the following recurrence for the normalised Hermite polynomials
\begin{align*}
	\mathrm{H}_{k+1}(x)=\sqrt{\frac{2}{k+1}}\, x\, \mathrm{H}_k(x) - \sqrt{\frac{k}{k+1}}\, \mathrm{H}_{k-1}(x),
\end{align*}
and the reader is referred to \cite{maierhoferwebb25} for full details on the algorithm used in our implementation.

\section{Auxiliary identities for Hermite expansions}\label{app:hermit_identities}
Here we recall a few identities for Hermite expansions which are used in the efficient implementation of our algorithm for computing the lens transform (cf. Section~\ref{sec:methodology}). For the following identities, we suppose that we have a representation of a function $f$ in the form
\begin{align*}
	f(x)=\sum_{\substack{m\in\N^d\\0\le m\le M-1}} \alpha_m\mathcal{H}_m(x).
\end{align*}
\subsection{Fourier transform}
The Fourier transform of $f$ can then be written as
\begin{align*}
	\mathcal{F}f(\xi)&=\sum_{\substack{m\in\N^d\\0\le m\le M-1}} \alpha_m\mathcal{F}\mathcal{H}_m(\xi)\\
	&=\sum_{\substack{m\in\N^d\\0\le m\le M-1}} \alpha_m(-i)^m\mathcal{H}_m(\xi),
\end{align*}
where we used the well-known expression for the Fourier transform of Hermite functions (cf. \cite[18.17.21\_3]{NIST:DLMF}).
\subsection{Differentiation}
In the interest of notational simplicity, we restrict the following to
the one dimensional case. The case $d\ge 2$ can be treated similarly by
tensor product extension. In order to compute the derivative of a
Hermite expansion, we note that the normalised Hermite polynomials are
of the form 
\begin{align*}
\mathrm{H}_{m}=\frac{1}{\sqrt{2^m\sqrt{\pi}m!}} \widetilde{\mathrm{H}}_m,
\end{align*}
where $\widetilde{\mathrm{H}}_m$ satisfies the identity
\begin{equation*}
  \frac{d}{dx}\(\widetilde{\mathrm{H}}_{m}e^{-x^2/2}\)=
  x\widetilde{\mathrm{H}}_{m}
  e^{-x^2/2}-\widetilde{\mathrm{H}}_{m+1}e^{-x^2/2},\quad  
  m\ge 0,
\end{equation*}
cf. \cite[18.9.26]{NIST:DLMF}. This leads to the identity
\begin{align*}
	f'(x)=\sum_{m\ge 0}\beta_m \mathrm{H}_m e^{-x^2/2},
\end{align*}
where $\beta_m=\gamma_m-\sqrt{2(m+1)}\alpha_{m+1}$ and $\gamma_m$ are the Hermite coefficients of $xf(x)$, i.e.
\begin{align*}
	xf(x)=\sum_{m\ge 0}\gamma_m \mathrm{H}_m e^{-x^2/2}.
\end{align*}

\section{Continuity of scattering operator with respect to
          $\sigma$}\label{app:continuity_argument_sigma} 

 In this section, we sketch an analytic explanation for
  Figure~\ref{fig:rotating_points_supercritical_regime}, in the case
  $d=1$, where we consider a slightly $L^2$-supercritical
  nonlinearity, $\si=2+\eps$, $0<\eps\ll 1$. Consider the solutions
  $v$ and $v^\eps$ corresponding to the $L^2$-critical case and this
  slightly $L^2$-supercritical case, respectively, with  possibly different data
  at $-\frac{\pi}{2}$,
\begin{equation}
\label{eq:cont-si}
\left\{
  \begin{aligned}
    &i\d_t v + \frac{1}{2}\d_x^2 v = \frac{x^2}{2}v + |v|^4v,\quad
      v_{\mid t=-\pi/2}=v_-\in \Sigma,\\
    & i\d_t v^\eps + \frac{1}{2}\d_x^2 v^\eps = \frac{x^2}{2}v^\eps +
      (\cos t)^\eps |v^\eps|^{4+2\eps}v^\eps,\quad v^\eps_{\mid
      t=-\pi/2} = v^\eps_-\in \Sigma.
  \end{aligned}
\right.
\end{equation}
Since $v^\eps_-$ will eventually be close to $v_-$ ($\not\equiv 0$), we assume
\begin{equation*}
  \sup_{\eps\in (0,1)} \|v^\eps_-\|_{\Sigma} \le 2 \|v_-\|_\Sigma.
\end{equation*}
As the nonlinearity is defocusing, we infer a uniform bound of the form
  \begin{equation*}
    \sup_{-\pi/2\le t\le \pi/2}\|v(t)\|_{\Sigma} +
  \sup_{\eps\in (0,1)}   \sup_{-\pi/2\le t\le
    \pi/2}\|v^\eps(t)\|_{\Sigma}\le C\(\|v_-\|_\Sigma\). 
  \end{equation*}
Since we are in the one-dimensional case, $H^1(\R)\hookrightarrow
  L^\infty(\R)$, and we deduce that there exists $M\ge 1$ (this lower
  bound is imposed to simplify the formulas below) such that
\begin{equation}\label{eq:M}
    \sup_{-\pi/2\le t\le \pi/2}\|v(t)\|_{L^\infty} +
  \sup_{\eps\in (0,1)}   \sup_{-\pi/2\le t\le
    \pi/2}\|v^\eps(t)\|_{L^\infty}\le M. 
  \end{equation}
We readily observe that the difference $w=v-v^\eps$ solves
  \begin{align*}
   i\d_t w+ \frac{1}{2}\d_x^2 w = \frac{x^2}{2}w +|v|^4v-(\cos t)^\eps
    |v^\eps|^{4+2\eps}v^\eps\quad ;\quad w_{\mid t=-\pi/2}=v_--v_-^\eps.
  \end{align*}
  Inserting the term $\pm (\cos t)^\eps|v|^{4+2\eps}v$,  energy
  estimate yields, for $t\in
  \left[-\frac{\pi}{2},\frac{\pi}{2}\right]$,
   \begin{align*}
    \|w(t)\|_{L^2} &\le \|w(-\pi/2)\|_{L^2}+
2 \int_{-\pi/2}^t (\cos s)^\eps \left\| |v|^{4+2\eps}v -
      |v^\eps|^{4+2\eps}v^\eps\right\|_{L^2}ds \\
&\quad +2\int_{-\pi/2}^t
    \left\||v|^4v-  (\cos s)^\eps |v|^{4+2\eps}v \right\|_{L^2}ds,
  \end{align*}
where the implicit time variable for $v$ and $v^\eps$ is $s$.
  In view of \eqref{eq:M}, for all $s\in
  \left[-\frac{\pi}{2},\frac{\pi}{2}\right]$,
  \begin{equation*}
   (\cos s)^\eps \left\| |v|^{4+2\eps}v -
      |v^\eps|^{4+2\eps}v^\eps\right\|_{L^2}\le
    CM^{4+2\eps}\|w(s)\|_{L^2}\le C M^6 \|w(s)\|_{L^2},
  \end{equation*}
for some constant $C$ uniform in $\eps\in (0,1)$. In order to apply
Gr\"onwall's lemma, we therefore focus on the source term
\begin{equation*}
  |v|^4v-  (\cos s)^\eps |v|^{4+2\eps}v = |v|^4v-  
  |v|^{4+2\eps}v + \(1 - (\cos s)^\eps\) |v|^{4+2\eps}v .
\end{equation*}
With $y\ge 0$  a placeholder for $|v|^4$, and up to changing $\eps$
with $2\eps$ to lighten notations, consider the function
\begin{equation*}
  f^\eps(y)=y-y^{1+\eps}.
\end{equation*}
This function is increasing on
$\left[0,\frac{1}{(1+\eps)^{1/\eps}}\right]$, decreasing on
  $\(\frac{1}{(1+\eps)^{1/\eps}}, +\infty\)$, and so
  \begin{equation*}
    \sup_{0\le y\le M^4}|f^\eps(y)|\le \max\( f^\eps\(
    \frac{1}{(1+\eps)^{1/\eps}}\),|f^\eps(M^4)|\),
  \end{equation*}
where we have chosen for $M$ the constant from \eqref{eq:M}. We
compute
\begin{equation*}
 f^\eps\(\frac{1}{(1+\eps)^{1/\eps}}\)=
 \frac{1}{(1+\eps)^{1/\eps}}\(1- \frac{1}{1+\eps}\) = \O(\eps),
\end{equation*}
and (since $M\ge 1$)
\begin{equation*}
  |f^\eps(M^4)|= M^{4+4\eps}-M^4 = M^4\(e^{4\eps\log M}-1\)=\O(\eps). 
\end{equation*}
We infer from \eqref{eq:M} and the conservation of the $L^2$-norm for
$v$,
\begin{align*}
  \int_{-\pi/2}^{\pi/2} \left\||v|^4v-|v|^{4+2\eps}v\right\|_{L^2}ds
& \le 
  \int_{-\pi/2}^{\pi/2}
  \left\||v|^4-|v|^{4+2\eps}\right\|_{L^\infty}\|v(s)\|_{L^2}ds \\
& \le \pi \|v_-\|_{L^2}\sup_{0\le y\le M^4} |f^{\eps/2}(y)| \le C\eps.
\end{align*}
The last part of the source term is
\begin{equation*}
  \int_{-\pi/2}^t \(1-\(\cos s\)^\eps\) \left\|
    |v|^{4+2\eps}v\right\|_{L^2}ds \le M^6 \|v_-\|_{L^2}
 \int_{-\pi/2}^{\pi/2} \(1-\(\cos s\)^\eps\) ds.
\end{equation*}
Taylor formula for the function $g(\eps) = (\cos s)^\eps$ yields, for
$s\in \(-\frac{\pi}{2},\frac{\pi}{2}\)$,
\begin{equation*}
  1- (\cos s)^\eps= g(0)-g(\eps) = -\eps \log \cos s\int_0^1 \(\cos
  s\)^{\theta\eps}d\theta \le \eps \log\frac{1}{\cos s}.
\end{equation*}
Since the right hand side is integrable on
$\(-\frac{\pi}{2},\frac{\pi}{2}\)$, we conclude
\begin{equation*}
  \int_{-\pi/2}^{\pi/2}
    \left\||v|^4v-  (\cos s)^\eps |v|^{4+2\eps}v \right\|_{L^2}ds\le C\eps,
\end{equation*}
for some constant $C$ independent of $\eps$. Invoking Gr\"onwall's lemma,
we have proved:
  \begin{proposition}\label{prop:continuity}
  There exists $C$ independent of $\eps\in (0,1)$ such that the
  solutions to \eqref{eq:cont-si} satisfy
  \begin{equation*}
    \sup_{-\pi/2\le t\le \pi/2}\|v(t)-v^\eps(t)\|_{L^2}\le
    C\|v_--v^\eps_-\|_{L^2}+ C\eps.
  \end{equation*}
  \end{proposition}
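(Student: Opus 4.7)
The plan is to run an $L^2$ energy estimate on the difference $w=v-v^\eps$ and close it with Gr\"onwall's lemma, reducing the entire argument to showing that the forcing generated by the mismatch of the two nonlinearities is $\O(\eps)$ in $L^1_t L^2_x$. Subtracting the two equations in \eqref{eq:cont-si}, multiplying by $\bar w$ and taking imaginary parts eliminates the self-adjoint operator $\frac{1}{2}\d_x^2-\frac{x^2}{2}$ (which is why this argument does not see the harmonic potential), leaving a right-hand side of the form $2\IM\int \bar w\(|v|^4v - (\cos t)^\eps |v^\eps|^{4+2\eps}v^\eps\)dx$. The natural move is then to insert the intermediate term $(\cos t)^\eps|v|^{4+2\eps}v$ so that the integrand splits into a part that is Lipschitz in $w$ and a pure source term involving only $v$.

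Before anything else I would record uniform bounds. Defocusing global wellposedness in $\Sigma$ for \eqref{eq:nlsharmo-gen} (via the evolution laws recalled in \eqref{eq:evol1}--\eqref{eq:evol2} for the critical case, with a minor variant for the slightly supercritical one) yields a bound $\|v(t)\|_\Sigma+\|v^\eps(t)\|_\Sigma\le C(\|v_-\|_\Sigma)$ uniformly in $\eps\in(0,1)$ and $t\in[-\pi/2,\pi/2]$. Since $d=1$, this upgrades via Sobolev embedding to a uniform $L^\infty$ bound $M$ on both solutions. With this in hand, the Lipschitz-in-$w$ piece $(\cos t)^\eps(|v|^{4+2\eps}v-|v^\eps|^{4+2\eps}v^\eps)$ is bounded pointwise by $CM^{4+2\eps}|w|\le CM^6|w|$, which is exactly the growth term needed to close a Gr\"onwall inequality.

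The core of the argument is to estimate the source term $|v|^4v-(\cos t)^\eps|v|^{4+2\eps}v$ in $L^1([-\pi/2,\pi/2];L^2)$. I would split it further as
\begin{equation*}
  \(|v|^4 - |v|^{4+2\eps}\)v \;+\; \(1-(\cos t)^\eps\)|v|^{4+2\eps}v.
\end{equation*}
For the first summand I would invoke the elementary calculus bound $\sup_{0\le y\le M^4}|y-y^{1+\eps}|=\O(\eps)$ (the inner extremum at $y=(1+\eps)^{-1/\eps}$ contributes $\O(\eps)$, and at $y=M^4$ one uses $M^{4\eps}-1=\O(\eps\log M)$), then multiply by $\|v\|_{L^2}$, which is uniformly bounded by conservation of mass. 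For the second summand, Taylor's formula applied to $\eps\mapsto(\cos t)^\eps$ gives $1-(\cos t)^\eps\le \eps\log(1/\cos t)$, and the crucial observation is that $\log(1/\cos t)$ is integrable on $(-\pi/2,\pi/2)$. Combined with the uniform $L^\infty$ bound on $v$ and mass conservation, this also contributes $\O(\eps)$ in $L^1_tL^2_x$.

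The main technical obstacle will be the behaviour of the second summand near $t=\pm\pi/2$, where $(\cos t)^\eps\to 0$ non-uniformly in $\eps$; what saves the day is precisely that the logarithmic Taylor-type bound produces an $\eps$-independent integrable majorant, but this is the only place in the proof where the interplay between the vanishing factor $(\cos t)^\eps$ and the power $4+2\eps$ really has to be handled with care. Once the source bound $C\eps$ is assembled, Gr\"onwall's lemma applied to
\begin{equation*}
  \|w(t)\|_{L^2}\le\|w(-\pi/2)\|_{L^2}+C\eps+CM^6\int_{-\pi/2}^t\|w(s)\|_{L^2}\,ds
\end{equation*}
delivers the stated estimate, with $C$ depending on $\|v_-\|_\Sigma$ only through $M$.
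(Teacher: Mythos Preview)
Your proposal is correct and follows essentially the same approach as the paper: the same energy estimate on $w$, the same insertion of the intermediate term $(\cos t)^\eps|v|^{4+2\eps}v$, the same Lipschitz bound via the uniform $L^\infty$ control, the same two-part split of the source term with the calculus lemma on $y-y^{1+\eps}$ and the Taylor bound $1-(\cos t)^\eps\le\eps\log(1/\cos t)$, and Gr\"onwall to conclude. The only cosmetic difference is that the paper invokes the uniform $\Sigma$ bound directly from the defocusing sign rather than pointing to the evolution laws \eqref{eq:evol1}--\eqref{eq:evol2}.
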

This result explains why the existence of rotating points for the
scattering operator when the nonlinearity is not $L^2$-critical is
doubtful in view of our numerical experiments. More precisely, in
Figure~\ref{fig:rotating_points_supercritical_regime}, we consider
$\si=2.01$, hence $\eps = 10^{-2}$, which corresponds to the order of
magnitude in the error observed, compared to the case $\si=2$ where
the existence of rotating points is granted analytically and validated
numerically.

\bibliographystyle{abbrv}
\bibliography{scatt}

\end{document}